\documentclass[a4paper,12pt]{article}

\usepackage[utf8]{inputenc}  
\usepackage[T1]{fontenc}     
\usepackage[english]{babel}  

\usepackage{geometry}    
\usepackage{graphicx} 
\usepackage{amsmath}
\usepackage{amssymb}
\usepackage{amsthm}
\usepackage{mathtools}
\usepackage[dvipsnames]{xcolor}
\usepackage{enumitem}
\usepackage{pifont}
\usepackage{hyperref}    
\usepackage{xcolor}

\usepackage{tikz}
\usetikzlibrary{matrix,calc,arrows.meta}
\usepackage{adjustbox}

\newtheorem{theorem}{Theorem}[section]
\newtheorem*{theorem*}{Main Theorem}
\newtheorem*{theoremA}{Theorem A}

\newtheorem{lemma}[theorem]{Lemma}
\newtheorem{coroll}[theorem]{Corollary}
\theoremstyle{definition}
\newtheorem{definition}[theorem]{Definition}
\newtheorem{remark}[theorem]{Remark}

\newtheorem{conj}[theorem]{Conjecture}

\newtheorem{openproblem}[theorem]{Open problem}

\usepackage{tikz-cd}
\usepackage{chronology}
\usepackage{epic}
\usepackage{pict2e}
\usepackage{ulem}
\newcommand{\redsout}{\bgroup\markoverwith{\textcolor{red}{\rule[0.5ex]{2pt}{2pt}}}\ULon}

\title{\bf  On the Hamiltonicity of generating graphs\\ of almost simple groups}
\author{LUIGI IORIO}
\date{}

\begin{document}

\maketitle

\begin{abstract}
\noindent The generating graph $\Gamma(G)$ of a finite group $G$ has vertex set $G\setminus\{1\}$, and two distinct vertices are adjacent if and only if they generate $G$. Breuer, Guralnick, Lucchini, Maróti and Nagy [Bull. Lond. Math. Soc. 42 (2010), 621–633] conjectured that, for every finite group $G$ with at least four elements, $\Gamma(G)$ contains a Hamiltonian cycle if and only if every proper quotient of $G$ is cyclic. They proved their conjecture for sufficiently large almost simple groups with alternating socle and for all almost simple groups with sporadic socle. In this paper, we complete the asymptotic picture for almost simple groups by proving the conjecture for sufficiently large almost simple groups with socle of Lie type. 
\end{abstract}

\medskip\medskip

\noindent {\bf Keywords}
\begin{sloppypar}
\noindent almost simple groups; simple groups of Lie type; probabilistic group generation; \mbox{generating graphs;} Hamiltonian cycles; monolithic groups
\end{sloppypar}

\bigskip
\noindent {\bf 2020 Mathematics Subject Classification}\\
{\it Primary}: 05C25, 20D60
{\it Secondary}: 05C45, 20P05, 20D06

\section{Introduction}
Questions concerning group generation have attracted considerable interest, both in past and present times. Perhaps the best-known result on this topic is that every finite simple group is $2$-generated (that is, it can be generated by two elements). This result, which is elementary for cyclic groups of prime order and for alternating groups, was established for groups of Lie type by Steinberg in 1962 (see \cite{Steinberg}), and for sporadic groups by Aschbacher and Guralnick in 1984 (see \cite{Aschbacher}). The literature on the subject has subsequently developed in several directions, ranging from investigating particular generating pairs with specific properties to estimating the total number of generating pairs. Regarding the latter problem, one of the most interesting achievements was the affirmative resolution of a conjecture posed in 1969 by Dixon, according to which generating pairs form an overwhelming majority: for a finite simple group, the probability that two elements, chosen independently and uniformly at random, generate the whole group tends to $1$ as the order of the group tends to infinity (for the proof, see \cite{Dixon}, \cite{KantorLubotzky} and \cite{LiebeckShalev}).

Already in his 1962 paper \cite{Steinberg}, Steinberg wondered whether a property stronger than \mbox{$2$-generation} might hold: is it true that, in a finite simple group, every element forms a generating pair with some other element? An affirmative answer was obtained in the late 1990s, independently by Stein in \cite{Stein} and by Guralnick and Kantor in \cite{GurKant2000}. In general, a group satisfying this property is said to be $(3/2)$-generated. In 2008, Breuer, Guralnick and Kantor in \cite{BGK} conjectured that, for finite groups, $(3/2)$-generation is equivalent to having all proper quotients cyclic. That conjecture was proved more recently, in 2021, by Burness, Guralnick and Harper in \cite{Annals}. Their proof proceeds through a third equivalent property: for every choice of two elements, there exists a third element that forms a generating pair with each of them, namely, the spread of the group is at least~$2$ (the spread of a group is the largest non-negative integer $k$ such that, for every $k$-tuple of elements, there exists another element that forms a generating pair with each of them). Thus, for a finite group~$G$, denoting by~$s(G)$ the spread of~$G$, we have the following equivalences.
$$
\begin{array}{c}
G/N \mbox{ is cyclic for every } \{1\}\neq N\trianglelefteq G\\[3pt]
\Updownarrow \\[3pt]
s(G)\geq 2\\[3pt]
\Updownarrow \\[3pt]
G \mbox{ is \((3/2)\)-generated }
\end{array}
$$
The property \(s(G)\geq 2\) may be interpreted as a local pattern among generating pairs, and it is therefore natural to consider whether generating pairs could be organized into global patterns of some kind. Here, a different point of view comes into play, offered by graph theory.
Given a group $G$, the generating graph $\Gamma(G)$ of $G$ is defined by taking as vertices the non-identity elements of $G$, and declaring that two vertices are adjacent if and only if they form a generating pair of $G$. It is clear that many results concerning the generation of a group can be expressed in terms of properties of its generating graph. For instance, saying that a group is generated by two distinct elements is equivalent to saying that its generating graph has at least one edge; also, the $(3/2)$-generation property is equivalent to saying that the generating graph has no isolated vertices, that is, every vertex has degree at least $1$; and again, having spread at least $2$ means that every pair of vertices has a common neighbour.
Summarizing, for a finite group $G$, denoting by $\delta(\Gamma(G))$ the minimum of the degrees of the vertices of $\Gamma(G)$ and by $N_{\Gamma(G)}(v)$ the set of neighbours of a vertex $v$, the Burness-Guralnick-Harper result in terms of generating graphs translates as follows:
$$
\begin{array}{c}
G/N \mbox{ is cyclic for every } \{1\}\neq N\trianglelefteq G\\[3pt]
\Updownarrow \\[3pt]
\forall v,w \mbox{ vertices of } \Gamma(G), \,N_{\Gamma(G)}(v) \cap N_{\Gamma(G)}(w) \neq \emptyset \\[3pt]
\Updownarrow \\[3pt]
\Gamma(G) \mbox{ has no isolated vertices } \mbox{(that is, \( \delta(\Gamma(G)) \geq 1\))}
\end{array}
$$
Observe that, if we assume that $G$ has at least $4$ elements, then to this chain of equivalences we can also add the condition that every vertex of $\Gamma(G)$ has degree at least $2$, that is, $\delta(\Gamma(G))\geq 2$.

In 2010, when the Breuer-Guralnick-Kantor conjecture had already been posed, Breuer, Guralnick, Lucchini, Maróti and Nagy took a further step within the graph-theoretic framework. They envisaged the possibility that the condition of having all proper quotients cyclic not only implied that every vertex had a neighbour, but also that some generating pairs could be organized into a specific pattern, more precisely into a Hamiltonian cycle, that is, a closed path passing through all the vertices of the graph. Thus, they posed the following conjecture.
\begin{conj}[\cite{5authors}, Conjecture 1.6 - Breuer, Guralnick, Lucchini, Maróti, Nagy]\label{MarotiConj} \ \\
    Let $G$ be a finite group with at least four elements. Then the graph $\Gamma(G)$ contains a Hamiltonian cycle if and only if $G/N$ is cyclic for all non-trivial normal subgroups $N$ of~$G$.
\end{conj}
Note that one implication is immediate. If $\Gamma(G)$ is Hamiltonian (namely, it has a Hamiltonian cycle), then $\delta(\Gamma(G)) \geq 1$. Thus, if we fix a non-trivial normal subgroup $N$ and an $x\in N\setminus\{1\}$, then there exists $y\in G$ such that $G=\langle x,y\rangle$, hence $G/N = \langle yN \rangle$. Therefore, the real content of the conjecture lies in the converse implication.

If this conjecture were true, the Hamiltonicity of $\Gamma(G)$ would be added to the equivalent conditions established by the Burness-Guralnick-Harper theorem. Thus, it would imply the striking fact that, for generating graphs, the only condition determining the existence of a Hamiltonian cycle would be the absence of isolated vertices, which is clearly a minimal necessary requirement. 
In the same paper in which Conjecture~\ref{MarotiConj} was posed, the authors proved it for several classes of finite groups, including: solvable groups; sufficiently large simple groups; sufficiently large almost simple groups with alternating socle; almost simple groups with sporadic socle; wreath products of the form $S\wr C_m$, where $S$ is a sufficiently large non-abelian simple group and $m$ is a prime power. In this paper, we prove that Conjecture~\ref{MarotiConj} also holds for sufficiently large almost simple groups with socle of Lie type, thereby completing the asymptotic picture for the class of almost simple groups. Clearly, for an almost simple group $G$ with socle $S$, the condition that all proper quotients are cyclic reduces to requiring $G/S$ to be cyclic. Thus, our main result is the following theorem.
\begin{theoremA}
    Let $G$ be a finite almost simple group with socle $S$ of Lie type such that $G/S$ is cyclic. If $G$ is sufficiently large, or equivalently if $S$ is sufficiently large, then the generating graph $\Gamma(G)$ of $G$ is Hamiltonian.
\end{theoremA}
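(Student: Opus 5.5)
The strategy follows the approach of Breuer, Guralnick, Lucchini, Maróti and Nagy for alternating socle. We reduce Hamiltonicity of $\Gamma(G)$ to a degree condition on a large subgraph, and treat the few low-degree vertices by hand. The combinatorial tool is a Pósa/Chvátal-type criterion. A graph on $n$ vertices is Hamiltonian provided, for every $k<n/2$, fewer than $k$ vertices have degree at most $k$. A more robust variant suffices: all but a set $B$ of "bad" vertices have degree at least $(1/2+\varepsilon)n$, and the vertices of $B$ can be absorbed into disjoint short paths whose endpoints have large degree. Write $G=\langle S,g\rangle$ with $G/S$ cyclic, set $n=|G|-1$, and for $x\in G$ write $d(x)$ for its degree, so $d(x)=|\{y:\langle x,y\rangle=G\}|$ (up to $\pm1$).

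\textbf{Step 1: generic elements have huge degree.} Let $\mathcal{M}(x)$ be the set of maximal subgroups of $G$ containing $x$. Then $|G|-d(x)\le \sum_{M\in\mathcal M(x)}|M|$. Together with the requirement that $y$ lie outside a coset union missing $G/S$ (here we need $y$ to map onto a generator of $G/S$ modulo $\langle x\rangle S$), this gives
\[ d(x)\ \ge\ \frac{\varphi\text{-proportion}}{1}\cdot|G| - \sum_{M\in\mathcal{M}(x)}|M|. \]
Here the first term is the proportion of elements $y$ with $\langle xS,yS\rangle=G/S$. By the Liebeck–Shalev and Guralnick–Kantor fixed-point-ratio estimates, for $x$ outside a set $B$ of size $o(|G|)$ the sum $\sum_{M\ni x}|M|/|G|$ tends to $0$. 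The delicate point is the cyclic quotient: the proportion of good $y$ is only $\varphi(m)/m$ times a local factor, where $m=|G/S|$, and this can be small when $x\in S$. So the right comparison is not with $n/2$ globally but coset by coset. We view $\Gamma(G)$ as a union of bipartite-like pieces between cosets $S g^i$ and $S g^j$ with $\langle g^i,g^j\rangle S=G$. We then find the Hamiltonian cycle by travelling through a cyclic sequence of cosets, using in each coset a Hall-type matching or path system. This is the same device used for $S\wr C_m$ in the earlier paper.

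\textbf{Step 2: bounding the bad vertices.} A vertex $x$ is bad if it lies in "too many" or "too large" maximal subgroups, typically unipotent-heavy elements or elements in parabolics. For such $x$ we still need $d(x)\ge c|S|$ for a fixed $c>0$, and we need $|B|$ small compared to $|S|$, say $|B|\le |S|/q$. The lower bound follows from the spread results of Burness–Guralnick–Harper (and Guralnick–Kantor). These provide, for every $x$, a conjugacy class $C$ of $S$ (or of the relevant coset) whose elements lie in very few maximal subgroups, so that a positive proportion of $C$ generates with $x$. Since $|C|\ge |S|/q^{r}$ for rank $r$, this yields $d(x)\ge |S|/(c'q^{r})$. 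Meanwhile $|B|$ is at most the size of the union of the "small" classes, which is $O(|S|/q)$. Bounded versus unbounded rank must be handled separately. In unbounded rank one uses instead the Fulman–Guralnick derangement/regular-semisimple proportion estimates.

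\textbf{Step 3: assembling the cycle, and the main obstacle.} Greedily attach each bad vertex to two distinct good neighbours in the correct cosets, choosing all of them disjointly. This is possible because $d(x)\gg |B|$, which is exactly the comparison from Step 2. Contract each such path to a single good vertex. Then apply the Pósa-type criterion to the resulting graph on the good vertices, organised coset by coset, where minimum degree exceeds half the relevant size. The main obstacle is Step 2 for the cosets $Sg^i$ with $g^i$ a nontrivial field or graph-field automorphism. There the maximal subgroups of $\langle S,g^i\rangle$ and the fixed-point-ratio bounds for outer elements (Shintani descent) are needed to get uniform constants. I would first try to reduce to the case $G/S$ of prime power order by projecting onto Sylow components of the cyclic quotient. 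I would then invoke the Burness–Guralnick–Harper uniform-domination bounds, which already give explicit classes $C\subseteq Sg$ with $\sum_{M\ni y}$ fpr small.
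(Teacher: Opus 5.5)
\paragraph{Verdict.} The proposal has genuine gaps: the two group-theoretic estimates your combinatorial skeleton needs are not supplied. Your skeleton is essentially the paper's: traverse the cosets $g^iS$ cyclically with consecutive pairs generating $G/S$, join consecutive cosets by Hall matchings, and absorb low-degree vertices into short paths with good endpoints.

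\paragraph{Degree of bad vertices.} Your own numbers contradict what Step~3 needs. You say correctly that every bad vertex needs $d(x)\ge c|S|$. Your argument, however, gives only $d(x)\ge |S|/(c'q^r)$ from a single regular semisimple class, while you bound $|B|=O(|S|/q)$. For $r\ge 2$ the degree bound is smaller than the bad-vertex bound by a factor of roughly $q^{r-1}$. So choosing fresh good neighbours for every bad vertex ``because $d(x)\gg|B|$'' is unjustified.

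The missing ingredient is a uniform linear bound: every $x\ne 1$ has at least $\delta|S|$ neighbours in \emph{every} coset $yS$ with $\langle xS,yS\rangle=G/S$, for an absolute $\delta>0$. This is the Fulman--Garzoni--Guralnick theorem (Corollary~\ref{FULMANcor}). Spread and uniform-domination results cannot give it. They produce a single conjugacy class, typically of size about $|S|/q^r$, inside a coset that generates $G/S$. In a cyclic traversal the target coset need not generate $G/S$ by itself. For example, when $n=6$, every cyclic ordering of the six cosets in which consecutive pairs generate $G/S$ contains the pair $\big[(C_2,C_3)\big]$.

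\paragraph{Size of the bad set.} Your bound on $|B|$ is unsupported and too weak. A bad vertex is not one lying in parabolics or in small classes. It is one that fails to generate with at least $\mu|S|$ elements of some relevant coset. Moreover, $O(|S|/q)$ is not even $o(|S|)$ when $q$ is bounded and the rank grows. A path absorbing $B$ that is balanced over the $n$ cosets uses about $n|B|$ vertices, so you need $n|B|=o(|S|)$ with $n$ unbounded. The paper gets this as follows:
\begin{itemize}
\item Harper--Quick gives $P_2(G,S)>1-\beta(S)$ with $\beta(S)\le q^{-r/30}+O(|S|^{-3/8})$.
\item Gasch\"utz's lemma transfers this to each generating pair of cosets, so $C_i\times C_j$ contains at most $\beta(S)|S|^2$ non-generating pairs.
\item Counting then yields $|B|\le 2(n+1)\beta(S)|S|/\mu$.
\item Finally $n^2\beta(S)\to 0$, because $n<\log_2|S|$ and $|S|\le q^{8r^2}$.
\end{itemize}

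\paragraph{Smaller points.} Your proposed reduction to $G/S$ of prime-power order via Sylow components has no basis. Whether $\langle x,y\rangle=G$ holds is not detected in the subgroups $\langle S,g^i\rangle$. The assembly in Step~3 is also vague, since juxtaposed matchings give disjoint strips rather than one cycle. The paper handles this in three moves:
\begin{itemize}
\item It builds a balanced bad path with $T$ vertices from each coset, so the leftover sets have sizes $m,\dots,m,m-1$.
\item It uses a second Hall matching in which the leftover of $C_0$ supplies the bridges between strips.
\item It closes the cycle using the extra generating pair $\big[(C_1,C_{n-1})\big]$.
\end{itemize}
These points are repairable along those lines; the two estimates above are the genuinely missing ideas.
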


The proof of Theorem A relies heavily on two recent probabilistic estimates for the number of generating pairs in almost simple groups (Theorem 1.1 in \cite{FULMAN} and Corollary 9 in \cite{uniserial}), together with an explicit combinatorial construction of the required Hamiltonian cycle.
Section 2 collects some preliminary material needed for our purposes, much of which will be familiar to the expert reader. In Section 3, we prove Theorem A through a sequence of subsections. Finally, in Section 4, we propose some open problems for future investigation.

\section{Preliminaries}

The aim of this section is to collect, for the reader’s convenience, terminology, notation and results that will be used frequently throughout the paper, while also deriving the consequences that will be useful for our purposes. The contents will include elementary notions from graph theory, basic facts about almost simple groups, and some recent probabilistic estimates concerning the $2$-generation of certain groups.

\subsection{Graph-theoretic tools}

In our context, a graph is a pair $\Delta=(V,E)$, where $V$ is a set whose elements are called vertices, and $E$ is a set of two-element subsets of $V$, called edges. If $v,w$ are distinct elements of $V$ and $\{v,w\}\in E$, we say that there is an edge between $v$ and $w$, or equivalently that $v$ (resp. $w$) is adjacent to $w$ (resp. $v$) in $\Delta$; moreover, $v$ and $w$ are called the endpoints of the edge $\{v,w\}$. If $v\in V$, by $N_{\Delta}(v)$ we denote the subset of vertices that are adjacent to $v$ in $\Delta$; these vertices are also called the neighbours of $v$ in $\Delta$. A graph is said to be finite if and only if its vertex set is finite. Note that, according to our definition, a graph has no loops (edges whose endpoints are the same vertex) and no multiple edges (there is at most one edge between any two vertices).

\begin{definition}[Generating graph of a group]
    Let $G$ be a group. The generating graph $\Gamma(G)$ of $G$ is the graph whose vertices are the non-identity elements of $G$ and whose edges are the two-element subsets of $G\setminus\{1\}$ that generate $G$. For example, the generating graph of $S_3$ has vertex set $S_3\setminus\{1\}$, and its edges are the two-element subsets consisting either of two transpositions or of a transposition and a $3$-cycle.

\end{definition}

\begin{definition}[Degree of a vertex]\label{degree}
    If $\Delta$ is a graph and $x$ is a vertex of $\Delta$, we denote by $d_\Delta(x)$ the degree of $x$, that is, the number of vertices adjacent to $x$.

In the context of the generating graph $\Gamma(G)$ of a finite group $G$, if $A\subseteq G$ and $x$ is a vertex of $\Gamma(G)$, we denote by $d_A(x)$ the number of vertices adjacent to $x$ in $\Gamma(G)$ and belonging to $A$, that is
 $$
 d_A(x) \coloneqq \big| N_{\Gamma(G)}(x)\cap A \big| \, .
 $$
 In particular, even when $1\in A$, $d_A(x)=d_{A\setminus\{1\}}(x)$, since $1$ is a not a vertex of $\Gamma(G)$. 

For later use, it is worth emphasizing that our definitions give rise to a potential discrepancy between $d_A(x)$ and the number of pairs
$(x,a)\in\{x\}\times A$ that generate $G$. Indeed, $d_A(x)$ cannot, by definition, count either $1$ (because $1$ is not a vertex of
$\Gamma(G)$) or~$x$ (because loops are not allowed in our graphs). In general, we only have
$$
d_A(x) = \big| \{(x,a)\in\{x\}\times A :
a\neq 1,\ a\neq x \mbox{ and } \langle x,a\rangle=G\} \big| \, .
$$
This discrepancy arises precisely when $x$ generates $G$ and either
$1\in A$ or $x\in A$. However, \textit{if $G$ is non-cyclic}, then
$$
d_A(x) = \big| \{(x,a)\in\{x\}\times A :
\langle x,a\rangle=G\} \big| \, ,
$$
and the discrepancy vanishes. Moreover, in this case, we also have
$$
|A|-d_A(x) = \big|\{(x,a)\in\{x\}\times A:
\langle x,a\rangle\neq G\}\big| \, ,
$$
thereby ensuring that $|A|-d_A(x)$ precisely counts the number of pairs in $\{x\}\times A$ that do not generate $G$.
\end{definition}

\begin{definition}[Path, Hamiltonian graph]
Let $\Delta=(V,E)$ be a graph. A path in $\Delta$ is a sequence
$$
P=(v_0,v_1,\ldots,v_k)
$$
of distinct vertices of $V$ such that $\{v_{i-1},v_i\}\in E$ for every $i\in\{1,\ldots,k\}$; $v_0$ and $v_k$ are called the endvertices of $P$.
If $k\geq2$ and $\{v_0,v_k\}\in E$, then $P$ is called a cycle. A cycle containing every vertex of $V$ is called a Hamiltonian cycle. A graph is called Hamiltonian if it contains a Hamiltonian cycle.
\end{definition}

In our constructions, we shall need a well-known result of Philip Hall on perfect matchings in bipartite graphs. The original reference for it is the 1935 paper of Hall (\cite{Hall}, Theorem~1), while the same result stated in the language of graph theory can be found in \cite{Diestel}, Theorem~2.1.2. In any case, the form in which we shall use it is presented below.

\begin{definition}[Bipartite graph]
A graph $\Delta=(V,E)$ is called bipartite if its vertex set $V$ can be partitioned into two disjoint subsets $X$ and $Y$ such that every edge of $\Delta$ has one endpoint in $X$ and the other in $Y$. The pair $(X,Y)$ is called a bipartition of $\Delta$ (as well as $(Y,X)$, here the order does not matter), and $X$ and $Y$ are called its parts.
\end{definition}

\begin{definition}[Perfect matching]
Let $\Delta=(V,E)$ be a graph. A perfect matching in~$\Delta$ is a subset $M$ of $E$ such that every vertex in $V$ is an endpoint of exactly one edge in $M$. In particular, if $\Delta$ is a bipartite graph with bipartition $(X,Y)$ such that $|X|=|Y|$, then a perfect matching in $\Delta$ describes a bijection between $X$ and $Y$ (by mapping each vertex $x\in X$ to the unique vertex $y\in Y$ such that $\{x,y\}\in M$).
\end{definition}

\begin{theorem}[Hall's marriage theorem]\label{Hall}
Let $\Delta$ be a finite bipartite graph with bipartition $(X,Y)$ such that $|X|=|Y|$. Then $\Delta$ has a perfect matching if and only if
$$
|N_\Delta(A)|\geq |A|
$$
for every non-empty subset $A$ of $X$, where $N_\Delta(A)$ denotes the set of vertices (of $Y$) adjacent to at least one vertex of $A$.
\end{theorem}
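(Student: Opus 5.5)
The statement just given is Hall's marriage theorem (Theorem~\ref{Hall}). I would prove it by induction on $n=|X|=|Y|$.

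Necessity is immediate. If $M$ is a perfect matching, then for every non-empty $A\subseteq X$ the partners of the vertices of $A$ under $M$ are $|A|$ distinct vertices of $Y$, all lying in $N_\Delta(A)$. Hence $|N_\Delta(A)|\geq|A|$.

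For sufficiency, assume Hall's condition holds. The base case $n=1$ is trivial, since the condition forces the unique vertex of $X$ to be adjacent to the unique vertex of $Y$. For the inductive step I would split into two cases according to whether the condition holds with slack.

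\emph{Case 1: slack everywhere.} Suppose $|N_\Delta(A)|\geq|A|+1$ for every non-empty proper subset $A\subsetneq X$. Pick any $x\in X$ and any neighbour $y$ of $x$; one exists because $|N_\Delta(\{x\})|\geq1$. Delete $x$ and $y$. In the remaining bipartite graph on $(X\setminus\{x\},Y\setminus\{y\})$, every non-empty $A\subseteq X\setminus\{x\}$ loses at most the single vertex $y$ from its neighbourhood. It therefore still has at least $|A|$ neighbours. Induction gives a perfect matching of the smaller graph, and adding the edge $\{x,y\}$ completes it.

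\emph{Case 2: a tight set.} Suppose some non-empty proper $A_0\subsetneq X$ satisfies $|N_\Delta(A_0)|=|A_0|$. Consider first the subgraph induced on $A_0\cup N_\Delta(A_0)$. Its two parts have equal size, and Hall's condition holds there, because neighbourhoods of subsets of $A_0$ already lie in $N_\Delta(A_0)$. Induction gives a perfect matching of it.

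Next consider the subgraph induced on $(X\setminus A_0)\cup(Y\setminus N_\Delta(A_0))$, whose parts again have equal size. For a non-empty $B\subseteq X\setminus A_0$, its neighbourhood in this subgraph is
$$N_\Delta(A_0\cup B)\setminus N_\Delta(A_0).$$
This has size at least $|A_0|+|B|-|A_0|=|B|$, by Hall's condition applied to $A_0\cup B$. Induction gives a perfect matching of this subgraph as well, and the union of the two matchings is a perfect matching of $\Delta$.

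The only step requiring care is this verification of Hall's condition on the complementary piece in Case~2. It is handled by the single inclusion–exclusion estimate above, so I expect no real obstacle.
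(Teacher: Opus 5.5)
Your proof is correct and complete. The case split into ``slack everywhere'' versus ``some non-empty proper tight set $A_0$'' is exhaustive. In Case~2 both pieces have size strictly between $0$ and $n$, so strong induction applies to each. On the complementary piece, the neighbourhood of $B$ is $N_\Delta(A_0\cup B)\setminus N_\Delta(A_0)=N_\Delta(B)\setminus N_\Delta(A_0)$. Since $N_\Delta(A_0)\subseteq N_\Delta(A_0\cup B)$, its size is exactly $|N_\Delta(A_0\cup B)|-|A_0|\geq|B|$, which justifies your count.

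The paper gives no proof of this theorem; it quotes it from Hall's 1935 paper and Diestel's Theorem~2.1.2. Your argument is the classical Halmos--Vaughan induction, one of the standard textbook proofs of that result, so there is no alternative argument in the paper to compare it with.
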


\begin{sloppypar}
\begin{coroll}\label{Hallcor}
Let $\Delta$ be a finite bipartite graph with bipartition $(X,Y)$ such that \mbox{$|X|=|Y| \eqqcolon N$.} Assume that every vertex of $\Delta$ has degree at least $N/2$. Then~$\Delta$ has a perfect matching.
\end{coroll}
\end{sloppypar}

\begin{proof}
By Hall's Marriage Theorem, it is enough to prove that $|N_\Delta(A)|\geq |A|$
for every non-empty subset $A$ of $X$.

Suppose first that $|A|\leq N/2$.
Choose $x\in A$. Since $d_\Delta(x)\geq N/2$, we have
$$
|N_\Delta(A)|\geq d_\Delta(x)\geq\frac{N}{2}\geq |A| \, .
$$

Suppose now that $|A| > N/2$. If there existed a vertex $y\in Y\setminus N_\Delta(A)$, then all the neighbours of $y$ would belong to $X\setminus A$. Hence,
$$
d_\Delta(y)\leq |X\setminus A|
=N-|A|
<\frac{N}{2} \, ,
$$
contradicting the hypothesis. Therefore, $N_\Delta(A)=Y$, and thus $|N_\Delta(A)|=N\geq |A|$.

Hence, Hall's condition holds and $\Delta$ has a perfect matching.
\end{proof} 

\subsection{Simple groups of Lie type}
We shall focus on finite simple groups of Lie type.
For our purposes, it will suffice to recall that, once a (not necessarily unique) Lie-type description of such a group has been fixed, two positive integer parameters are attached to that description: the Lie rank $r$ and the field size $q$, where $q$ is a prime power. The results of Harper and Quick in \cite{uniserial} that we shall use later are formulated in terms of $r$ and $q$, so, to avoid compatibility issues, we adopt their conventions --- the same as those in \cite{Liebeck} --- which specify a preferred description in the relevant exceptional cases and take the Lie rank to mean the untwisted rank, that is, the rank of the ambient algebraic group.

Moreover, we shall need the following upper bound from \cite{Pyber} (given as a consequence of some tables in \cite{Liebeck}), which is expressed in terms of the parameters $r$ and $q$: no compatibility issue arises, since the authors of \cite{Pyber} explicitly state that it is valid ``for whichever standard definition of these two parameters one cares to take''.

\begin{lemma}[\cite{Pyber}, Proposition 2.2]\label{PyberLemma}
    Let $S$ be a finite simple group of Lie type of rank $r$ and field size $q$. Then
    $$
    |S| \leq q^{8r^2} \, .
    $$
\end{lemma}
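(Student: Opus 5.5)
The bound $|S|\le q^{8r^2}$ can be obtained by a direct order computation, type by type, using the standard order formulas as tabulated in \cite{Liebeck}. For a simple group of Lie type with untwisted rank $r$ over $\mathbb{F}_q$, the order has the shape
$$
|S| \;=\; \frac{1}{d}\, q^{N} \prod_{i=1}^{r} \bigl(q^{d_i} - \epsilon_i\bigr),
$$
where $N$ is the number of positive roots, the $d_i$ are the degrees of the basic invariants of the Weyl group, $\epsilon_i$ is a root of unity (in the untwisted case $\epsilon_i=1$), and $d\ge 1$. For twisted groups the same shape holds after replacing $q$ by a power $q^e$, with $e\in\{2,3\}$ in the Steinberg cases and appropriate modifications for the Suzuki and Ree groups. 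So the strategy is to bound each factor $q^{d_i}-\epsilon_i$ crudely by $q^{d_i}$ or $2q^{d_i}$ and then use the identity $N+\sum_i d_i = \dim G$, the dimension of the ambient algebraic group. This gives roughly $|S|\le 2^r q^{\dim G}$ in the untwisted case and $|S|\le 2^r q^{e\dim G}$ in the worst twisted case.

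The key steps, in order, are as follows.
\begin{enumerate}[label=(\roman*)]
\item Bound $\dim G$ in terms of $r$ for each type. Classical types satisfy $\dim G\le 2r^2+r$ (attained for $B_r$ and $C_r$), while $A_r$ has dimension $r^2+2r$. The exceptional types have dimension at most $248=\dim E_8$, with $r=8$, so $\dim G\le 4r^2$ holds in all cases; for $G_2$, for instance, $14\le 16$.
\item Absorb the factor $2^r\le q^r$, valid since $q\ge 2$, which gives $|S|\le q^{\dim G + r}\le q^{5r^2}$ for the untwisted groups.
\item Handle the twisted groups, where the defining field is larger than the parameter $q$ under the Harper--Quick/\cite{Liebeck} convention. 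For ${}^2A_r(q)$ the order is at most about $q^{\dim G}$ times small factors, since $|SU_{n}(q)|\le q^{n^2-1}\cdot 2^{n}$. For ${}^3D_4(q)$ one has $|S|\le q^{28}\cdot q^{\text{small}}$, and with $r=4$ this is well below $q^{128}$. For ${}^2B_2$, ${}^2G_2$ and ${}^2F_4$ the orders are at most $q^{5}$, $q^{7}$ and $q^{26}$ respectively, which are comfortable against $q^{8r^2}$ for $r=2,2,4$.
\end{enumerate}

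The main obstacle is convention bookkeeping rather than any real estimate. The paper warns that $q$ may denote $q^2$ or $q^{1/2}$ in some descriptions (for example, for Suzuki groups $q=2^{2m+1}$ versus $2^{m+1/2}$). I would therefore check the worst-case convention in each twisted family, where $q$ is the smaller field parameter and the order may be as large as $q^{2\dim G}$. Since $2\cdot 4r^2 = 8r^2$, the constant $8$ still suffices, using $|S|\le q^{2(\dim G)}$ together with the slack coming from $\dim G+r\le 4r^2$ once the factor $2^r$ and the division by $d$ are accounted for. The small-rank cases, such as $r=1$ with $|PSL_2(q)|\le q^3\le q^8$, I would verify by hand.
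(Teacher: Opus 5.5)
Your type-by-type check is correct and is essentially the route behind the statement. The paper gives no proof of its own: it quotes Proposition~2.2 of the cited source, which is read off from the standard order tables. Your reduction to $|S|\le 2^r q^{\dim G}$ together with $\dim G+r\le 4r^2$ (true for every type, with equality for $A_1$, $G_2$, $E_8$) is exactly that computation.

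One correction to your twisted-case formula: for the Steinberg groups ${}^2A_r$, ${}^2D_r$, ${}^3D_4$, ${}^2E_6$ the order is $\frac{1}{d}q^N\prod_i(q^{d_i}-\epsilon_i)$ with the same $q$ and $|\epsilon_i|=1$, with no substitution $q\mapsto q^e$. Your resulting upper bound is still true, but the correction only helps you, since the $q^{e\dim G}$ and $q^{2\dim G}$ worst cases you guard against never arise. Also, the paper does not warn about a non-integer parameter like $2^{m+1/2}$. It fixes a prime-power $q$ and the untwisted rank, and under those conventions your explicit Suzuki--Ree bounds $q^5$, $q^7$, $q^{26}$ are the relevant ones.
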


\subsection{Almost simple groups, Monolithic groups}
For a finite group $S$, we denote, as usual, by $\operatorname{Aut}(S)$, $\operatorname{Inn}(S)$ and $\operatorname{Out}(S)$, respectively, the group of automorphisms of $S$, the group of inner automorphisms of $S$ and the quotient $\operatorname{Aut}(S)/\operatorname{Inn}(S)$ (see, e.g., \cite{Robinson}).

A finite group $G$ is called almost simple if there exists a non-abelian simple group $S$ such that, up to isomorphism,
$$
S\leq G\leq \operatorname{Aut}(S) \, .
$$
By replacing the given embedding of $G$ into $\operatorname{Aut}(S)$, if necessary, we may assume that $S$ is identified with $\operatorname{Inn}(S)$ via the natural isomorphism induced by conjugation (see 1.5.2 and 1.5.3 in \cite{Robinson}) --- this compatibility is often explicitly required in standard definitions, although it is not necessary to do so in the finite case.
Equivalently, a finite group~$G$ is almost simple if and only if it has a unique minimal non-trivial normal subgroup that is non-abelian simple; in this case, this subgroup is precisely the subgroup~$S$ appearing in the above definition.
In particular, it follows that $S\trianglelefteq G$ and that~$S$ is the socle of~$G$ (that is, the subgroup generated by all minimal non-trivial normal subgroups of~$G$). Accordingly, it is customary to refer to $S$ as the socle of~$G$, and we shall follow this convention.

More generally, a finite group~$G$ is called monolithic if it has a unique minimal \mbox{non-trivial} normal subgroup $N$. In this case, $N$ coincides with the monolith of~$G$, that is, the intersection of all non-trivial normal subgroups of~$G$. Clearly, $N$ is also the socle of~$G$. However, for a monolithic group~$G$, it is customary to refer to~$N$ as the monolith of~$G$. The subgroup~$N$ is characteristically simple and hence, since it is finite, isomorphic to a direct product of finitely many copies of the same finite simple group (see~3.3.15 in~\cite{Robinson}). Clearly, monolithic groups form a broader class that contains almost simple groups.

\begin{definition}[k-generated groups]
Let $k$ be a positive integer. A group $G$ is said to be $k$-generated if it admits a generating set of cardinality at most $k$.

For a finitely generated group $G$, the minimum cardinality of a generating set is denoted by $d(G)$, that is,
$$
d(G)\coloneqq\min\big\{|X| : X\subseteq G,\ \langle X\rangle=G\big\} \, .
$$
Accordingly, $G$ is $k$-generated if and only if $d(G)\leq k$.
\end{definition}

\begin{theorem}[\cite{LucchiniMenegazzo}, Theorem 1.1]\label{LucchMen} 
Let $G$ be a non-cyclic finite group with a unique minimal non-trivial normal subgroup $N$. Then
$$
d(G)=\max\{2,d(G/N)\}.
$$
\end{theorem}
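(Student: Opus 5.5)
The equality splits into two inequalities, and only the upper bound needs work. Since $G$ is non-cyclic, $d(G)\geq 2$. Since generators of $G$ map onto generators of $G/N$, $d(G)\geq d(G/N)$. Set $d\coloneqq\max\{2,d(G/N)\}$. Choose $g_1,\dots,g_d\in G$ whose images generate $G/N$, padding with $1$'s if $d(G/N)<d$. It remains to find $n_1,\dots,n_d\in N$ with $\langle g_1n_1,\dots,g_dn_d\rangle=G$. I would do this by counting: among the $|N|^d$ tuples $(g_in_i)_i$ lifting the fixed generating tuple of $G/N$, I show that not all of them can generate a proper subgroup. Any proper subgroup $H$ generated by such a lift satisfies $HN=G$ and $N\not\leq H$. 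I would split according to whether $N$ is abelian.

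\emph{Abelian case.} Here $N$ is an elementary abelian $p$-group and an irreducible $G/N$-module. If $HN=G$ with $H\neq G$, then $H\cap N$ is normalized by $H$ and centralized by $N$, so $H\cap N\trianglelefteq G$. By minimality of $N$, $H\cap N=1$, so $H$ is a complement to $N$. A complement $K\cong G/N$ contains exactly one lift of the fixed tuple. Hence the bad tuples are at most the number of complements. If $N$ is central, a complement $K$ would give $G=N\times K$, with $K$ normal. Uniqueness of the minimal normal subgroup then forces $K=1$ and $G=N$ cyclic, which is excluded. So there are no complements and every lift generates $G$. If $N$ is a non-trivial irreducible module, the number of complements equals $|H^1(G/N,N)|\cdot|N:C_N(G)|\leq |H^1(G/N,N)|\cdot|N|$. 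I would invoke the known bound $|H^1(Q,V)|<|V|$ for a faithful irreducible module, passing to the faithful quotient $Q=G/C_G(N)$ if necessary. This gives fewer than $|N|^2\leq|N|^d$ complements, so some lift generates $G$.

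\emph{Non-abelian case.} Here $N=S^k$ with $S$ non-abelian simple, and a bad lift lies in a maximal subgroup $M$ with $MN=G$ and $M\cap N<N$. I would use Gaschütz's lemma in its counting form. The number of lifts of a generating $d$-tuple of $G/N$ that generate $G$ does not depend on the chosen tuple. Therefore it equals
$$|N|^d\,P_{G,N}(d),\qquad P_{G,N}(d)=\frac{P_G(d)}{P_{G/N}(d)},$$
and it suffices to show $P_{G,N}(d)>0$. To do this I would bound
$$1-P_{G,N}(d)\leq\sum_M |M\cap N|^d/|N|^d,$$
summing over maximal supplements $M$ of $N$. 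I would split these $M$ according to the structure of $M\cap N$: it is either a product of subgroups of the form (maximal subgroup of $S$)$\,\times S^{k-1}$, or of diagonal type. I would then use polynomial bounds on the number of maximal subgroups of $S$, together with $|\operatorname{Out}(S)|$, to show the sum is below $1$ for $d\geq 2$.

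I expect this last estimate to be the main obstacle. It needs uniform control, over all simple $S$ and all $k$, of the diagonal-type supplements. Those are the ones that are numerous when $k$ is large, and their contribution must be absorbed using the fact that $d\geq 2$.
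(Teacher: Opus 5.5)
The paper gives no proof of this statement. It is quoted from Lucchini--Menegazzo and used only to get $d(G)=2$ in Lemma~\ref{d(G)=2}, which in turn supplies the hypothesis of Theorem~\ref{HQstrongTh}. So your argument has to stand on its own, and in the non-abelian case, which is the real content of the theorem, it has a genuine gap.

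\textbf{The non-abelian case.} You reduce correctly to showing $\sum_M (|M\cap N|/|N|)^d<1$ over maximal supplements $M$ of $N$. A union bound for the fixed tuple already gives this reduction, so Gaschütz is not needed. But you then only announce that this sum is small, and your outline would not establish it as written.

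\begin{itemize}
\item Your list of possible $M\cap N$ is wrong. Since $G=MN$, $M$ permutes the $k$ factors of $N$ transitively. So in the non-diagonal case $M\cap N=X_1\times\cdots\times X_k$ with all $X_i$ conjugate, of index $|S:X_1|^k$ in $N$, not (maximal subgroup)$\times S^{k-1}$.
\item You omit the case $M\cap N=1$ (maximal complements, of twisted wreath type), which needs its own count.
\item More seriously, bounds of the form ``at most $n^c$ maximal subgroups of index $n$'' are useless at $d=2$. A $G$-class of maximal supplements of index $n$ has exactly $n$ members and contributes $n\cdot n^{-d}$, which is $1/n$ when $d=2$.
\end{itemize}

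What you actually need, uniformly in $S$ and $k$, is a bound like $\sum |G_1:Y|^{-1}<1$. Here the sum runs over conjugacy classes of maximal subgroups $Y\not\geq S_1$ of the almost simple groups $G_1=N_G(S_1)/C_G(S_1)$. You also need control of block systems and of $|\operatorname{Out}(S)|$ in the diagonal case. For $A_6$ that class sum is already $17/30$. So this can only be done with CFSG-based information on classes of maximal subgroups plus explicit checks for small $S$, and none of this is in your sketch. You cannot shortcut it with Theorem~\ref{HQstrongTh} either, since as quoted it assumes $G$ is already $d$-generated, which is exactly what you are trying to prove.

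\textbf{The abelian case.} This is fine except for the phrase ``passing to the faithful quotient''. By inflation--restriction, $H^1(G/N,N)$ differs from $H^1(G/C_G(N),N)$ by a subgroup of $\operatorname{Hom}_G(C_G(N)/N,N)$, which can be non-zero. Indeed $|H^1(G/N,N)|=|N|$ occurs for $N=Z(E)$ in $G=E\rtimes\langle t\rangle$, where:
\begin{itemize}
\item $E$ is extraspecial of order $p^3$ and exponent $p$, with $p$ odd;
\item $t$ induces $\operatorname{diag}(a,1)$ on $E/Z(E)$, with $1\neq a\in\mathbb{F}_p^\times$;
\item $t$ has the same order as $a$.
\end{itemize}
What saves you is that the bound is only needed when a complement $K$ exists. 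Then $C_K(N)$ is normalised by $NK=G$ and meets $N$ trivially, so $C_K(N)=1$ and $C_G(N)=N$. Thus $N$ is already a faithful irreducible $G/N$-module, and Aschbacher--Guralnick applies directly. Also, the number of complements is either $0$ or $|Z^1(G/N,N)|$, so your ``equals'' should read ``is at most''.
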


\begin{lemma}\label{d(G)=2}
    Let $G$ be an almost simple group with socle $S$ such that $G/S$ is cyclic. Then $d(G) = 2$. In particular, $G$ is $2$-generated.
\end{lemma}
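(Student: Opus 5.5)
The plan is to apply Theorem~\ref{LucchMen} directly, so the work reduces to checking its hypotheses and computing $d(G/S)$.

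First, $G$ has a unique minimal non-trivial normal subgroup, namely its socle $S$. This holds by the characterization of almost simple groups recalled above, so $G$ is monolithic with monolith $N=S$.

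Second, $G$ is non-cyclic. Indeed, $S\le G$ is non-abelian simple, hence non-abelian. A cyclic group is abelian, and every subgroup of an abelian group is abelian, so $G$ cannot be cyclic.

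Third, $G/S$ is cyclic by hypothesis, so $d(G/S)\le 1$. We may have $d(G/S)=0$ when $G=S$, since the trivial group is generated by the empty set; in either case $d(G/S)\le 1$.

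Theorem~\ref{LucchMen} then gives
$$
d(G)=\max\{2,d(G/S)\}=2 ,
$$
and hence $G$ is $2$-generated.

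No step here is a genuine obstacle. The only point needing care is verifying that $G$ is non-cyclic, which Theorem~\ref{LucchMen} requires, and this follows immediately from $S$ being non-abelian.
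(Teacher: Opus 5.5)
Your proposal is correct and matches the paper's proof, which simply invokes Theorem~\ref{LucchMen}. You additionally spell out the hypothesis checks that the paper leaves implicit: $S$ is the unique minimal normal subgroup, $G$ is non-cyclic because $S$ is non-abelian, and $d(G/S)\le 1$.
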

\begin{proof}
    This follows immediately from Theorem \ref{LucchMen}.
\end{proof}
\begin{theorem}[\cite{Colva}, Lemma 2.1]\label{Colvath}
Let $S$ be a finite non-abelian simple group. Then
$$
\big|\operatorname{Out}(S)\big| \leq \frac{6}{7}\log_2|S| \, .
$$
\end{theorem}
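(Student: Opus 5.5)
We argue via the classification of finite simple groups, treating alternating, sporadic and Lie-type socles separately. The first two families are immediate; the real content is a uniform estimate for groups of Lie type, plus a finite list of small exceptions checked by hand.

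\emph{Alternating and sporadic groups.} For $S=A_n$ with $n\geq 5$ and $n\neq 6$ we have $|\operatorname{Out}(S)|=2$. Since $\tfrac67\log_2 60>5$, the bound holds. For $A_6$ we have $|\operatorname{Out}(A_6)|=4$ and $\tfrac67\log_2 360\approx 7.3$. For the $26$ sporadic groups $|\operatorname{Out}(S)|\leq 2$, and every sporadic group has order at least $|M_{11}|=7920$, so the inequality is trivial.

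\emph{Lie type.} Let $S$ be of Lie type over $\mathbb{F}_q$ with $q=p^f$. By Steinberg's description we have $|\operatorname{Out}(S)|=d\cdot f\cdot g$, where:
\begin{itemize}
\item $d$ is the order of the group of diagonal automorphisms, with $d\leq r+1$ and $d\leq q+1$;
\item $f$ accounts for field automorphisms, doubled in the twisted case;
\item $g\leq 2$ is the graph factor, except $g=6$ for $D_4$ and a factor $2$ in the special cases $B_2,G_2,F_4$ in characteristics $2,3,2$.
\end{itemize}
On the other side, the standard order formula gives
$$
|S|\geq \frac{q^{N}}{d}\,\prod_i (q^{d_i}-1),
$$
where $N$ is the number of positive roots. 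Hence
$$
\log_2|S|\geq (\dim\text{ of the algebraic group})\cdot f\log_2 p-\log_2 d-O(r).
$$
We use the factors of $|S|$ to absorb each factor of $|\operatorname{Out}(S)|$ separately:
\begin{itemize}
\item Since $f\log_2 p=\log_2 q\geq f$, one factor of $f$ is absorbed at the cost of a single factor of $\log_2 q$.
\item The remaining power, of exponent roughly $\dim-1\geq 2$, comfortably absorbs $d\cdot g\leq 6(r+1)$.
\end{itemize}
Indeed, we need $q^{\dim-1}\gg (r+1)\cdot 6$, which holds for all $q\geq 2$ once $r$ or $q$ is moderately large. I would write this out family by family, for $\mathrm{PSL}_n$, $\mathrm{PSU}_n$, $\mathrm{PSp}_{2m}$, the orthogonal groups and the exceptional groups, using crude lower bounds such as $|\mathrm{PSL}_n(q)|\geq q^{n^2-2}/2$, together with the elementary fact that $\log_2 x$ grows more slowly than the polynomial lower bounds.

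\emph{Small cases (main obstacle).} The generic estimate leaves finitely many pairs $(r,q)$ with small rank and small $q$ where $d$ or $g$ is large relative to $|S|$. Examples are $\mathrm{PSL}_3(4)$, $\mathrm{PSU}_3(q)$ and $\mathrm{PSU}_4(3)$ for small $q$, $P\Omega_8^+(q)$ with $q\leq 3$, and $\mathrm{PSL}_2(q)$ for small $q$. These I would check directly from the known orders and outer automorphism groups, as in the ATLAS.

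The tightest case appears to be $\mathrm{PSL}_3(4)$. There $|\operatorname{Out}(S)|=12$ and $\tfrac67\log_2 20160\approx 12.25$, which explains the constant $\tfrac67$. I would therefore make the cut-off between "generic" and "check by hand" explicit enough that the residual list is short. I would then verify each residual group numerically, taking care with the twisted groups, where the field factor is $2f$ rather than $f$.
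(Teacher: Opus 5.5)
The paper does not prove this statement. It imports it from Lemma~2.1 of \cite{Colva}, so your proposal has to stand on its own. Your skeleton is the natural one: CFSG, the trivial alternating and sporadic cases, then a generic Lie-type bound plus a finite residual list. You are also right that $\mathrm{PSL}_3(4)$ is extremal, with $12\le\frac67\log_2 20160\approx 12.26$.

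The gap is in the generic Lie-type step, which is carried out on the wrong scale. You must prove $dfg\le\frac67\log_2|S|$, and $\log_2|S|\approx\dim\mathbf{G}\cdot f\log_2 p$ is additive in the exponent. Once $f\le\log_2 q$ has been used, what remains to absorb $dg$ is the factor $\frac67\bigl(\dim\mathbf{G}-(\log_2 d+O(1))/\log_2 q\bigr)$, not the power $q^{\dim-1}$. Your criterion $q^{\dim-1}\gg 6(r+1)$ is what one would check to get $|\operatorname{Out}(S)|\le|S|$. It says nothing about the actual comparison, which is $dg$ against roughly $\frac67\dim\mathbf{G}\cdot\log_2 p$, and that comparison gains essentially nothing from enlarging $f$.

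In that comparison the margin is thin in small rank, and your chain fails on an infinite family. Take $S=P\Omega_8^+(q)$ with $q=p^f$ odd.
\begin{itemize}
\item Here $|\operatorname{Out}(S)|=24f$, so $dg=24=\frac67\cdot 28$.
\item Also $|S|=q^{12}(q^2-1)(q^4-1)^2(q^6-1)/4<q^{28}/4$, which gives $\frac67\log_2|S|<24\log_2 q-\frac{12}{7}$.
\item So $24f\le 24\log_2 q\le\frac67\log_2|S|$ breaks at the second inequality for \emph{every} odd $q$, not just $q\le 3$. This is not a finite residue to be checked by hand.
\item It is rescued only by keeping the factor $\log_2 p\ge\log_2 3$: for instance $\frac67\log_2|S|\ge\frac67(28f\log_2 3-3)>24f$.
\end{itemize}
Similarly, for $A_2$ and ${}^2A_2$ in characteristic $2$ the comparison is $dg=6$ against $\frac67\cdot 8$. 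That is exactly why $\mathrm{PSL}_3(4)$ is nearly sharp, and why nothing is absorbed ``comfortably''.

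What is needed, family by family, is the explicit inequality $dgf\le\frac67\bigl(\dim\mathbf{G}\cdot f\log_2 p-\log_2 d-c\bigr)$ with an explicit constant $c$. You then list the finitely many $(r,p,f)$ where it fails and check those directly.

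Smaller points:
\begin{itemize}
\item Your order lower bound using the invariants of the ambient group is false for Suzuki and Ree groups. For example $|{}^2B_2(q)|\approx q^5$, not $q^{10}$, so $\dim\mathbf{G}$ must be halved there. This is harmless, since $|\operatorname{Out}(S)|=f$ for these groups.
\item The field factor for ${}^3D_4(q)$ is $3f$, not $2f$.
\item The exceptional isomorphisms and the Tits group belong in the residual list.
\end{itemize}
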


\begin{lemma}\label{ColvaLemma}
    Let $G$ be a finite almost simple group with socle $S$. Then
    $$
    \left|\frac{G}{S}\right| \leq \frac{6}{7}\log_2|S| \, .
    $$
    In particular,
    $$
    \left|\frac{G}{S}\right| < \log_2|S| \, .
    $$
\end{lemma}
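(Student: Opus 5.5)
The argument is a direct reduction to Theorem~\ref{Colvath}. Since $G$ is almost simple with socle $S$, we may regard $S \trianglelefteq G \leq \operatorname{Aut}(S)$, with $S$ identified with $\operatorname{Inn}(S)$ as described above.

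First, we compare $G/S$ with the outer automorphism group. The quotient $G/S = G/\operatorname{Inn}(S)$ embeds as a subgroup of $\operatorname{Aut}(S)/\operatorname{Inn}(S) = \operatorname{Out}(S)$, via the correspondence theorem, since $\operatorname{Inn}(S) \leq G \leq \operatorname{Aut}(S)$. Lagrange's theorem then gives $|G/S| \leq |\operatorname{Out}(S)|$. Applying Theorem~\ref{Colvath} to the non-abelian simple group $S$, we obtain
$$
|G/S| \leq |\operatorname{Out}(S)| \leq \tfrac{6}{7}\log_2|S| .
$$

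Second, we derive the strict inequality. Since $S$ is non-abelian simple, we have $|S| \geq 60 > 1$, so $\log_2|S| > 0$. It follows that
$$
\tfrac{6}{7}\log_2|S| < \log_2|S| .
$$
Combining this with the first bound yields $|G/S| < \log_2|S|$.

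There is no real obstacle here. The only point needing care is that the identification of $S$ with $\operatorname{Inn}(S)$ makes the embedding $G/S \hookrightarrow \operatorname{Out}(S)$ legitimate. This is exactly the convention fixed earlier in the paper: after replacing the embedding if necessary, $S$ corresponds to $\operatorname{Inn}(S)$.
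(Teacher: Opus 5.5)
Your proposal is correct and matches the paper's proof: the paper also notes that $G/S \cong G/\operatorname{Inn}(S)$ is a subgroup of $\operatorname{Out}(S)$ and then applies Theorem~\ref{Colvath}. Your derivation of the strict inequality from $\log_2|S| > 0$ (since $|S| \geq 60$) spells out a step the paper leaves implicit.
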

\begin{proof}
    Since $G/S \simeq G/\operatorname{Inn}(S)\leq \operatorname{Aut}(S)/\operatorname{Inn(S)} = \operatorname{Out}(S)$, the statement follows immediately from Theorem \ref{Colvath}.
\end{proof}

\subsection{Asymptotic conventions}

Let $\mathcal{A}$ be a family of finite groups, and let
$$
\sigma:\mathcal{A}\longrightarrow \mathbb{R}
\quad \mbox{ and } \quad
f:\mathcal{A}\longrightarrow \mathbb{R}
$$
be real-valued functions. Let $k$ be a real number. We write
$$
\lim_{\sigma(G)\to+\infty} f(G)=k
$$
to mean that, for every $\varepsilon>0$, there exists a positive real number $s_0$ such that, for every $G\in\mathcal{A}$ satisfying $\sigma(G)\geq s_0$, we have
$$
|f(G)-k|<\varepsilon \, .
$$
An analogous definition is adopted when $k=+\infty$: for every $R>0$, there exists a positive real number $s_0$ such that, for every $G\in\mathcal{A}$ satisfying $\sigma(G)\geq s_0$, we have $f(G)>R$.

In this paper, the families $\mathcal{A}$ under consideration will consist of almost simple groups satisfying certain hypotheses, and $\sigma(G)$ will be the order of the socle $S$ of $G$. Accordingly, we shall write
$$
\lim_{|S|\to+\infty} f(G)=k \, ,
$$
where $f(G)$ will be replaced by the explicit expression of the function under consideration.

Similarly, whenever we say that a statement holds ``for~$|S|$ sufficiently large'', ``for~$|S|$ large enough'', or use a similar formulation, we mean that there exists a positive real number $s_0$ such that the statement holds for every almost simple group $G$ in the family under consideration whose socle $S$ satisfies $|S|\geq s_0$. For our purposes, it will be fundamental to have a single threshold $s_0$ that works simultaneously for all groups in the family.

\subsection{Probabilistic estimates for group generation}

To prove Theorem A, we rely on some powerful results that turn certain $2$-generation properties of a quotient into probabilistic $2$-generation estimates for the original group. More precisely, we shall use certain lower bounds regarding two different probabilities associated with the event that two elements, lying in two fixed cosets, generate the original group, \textit{given that those cosets jointly generate the corresponding quotient}. The details of this subsection will clarify the meaning of this sentence.
\\[-5pt]

The first lower bound concerns a ``pointwise'' $2$-generation probability: the first element is fixed, while the second one is chosen uniformly at random from the prescribed coset. The result we need follows immediately from Theorem 1.1 of \cite{FULMAN}, which we recall below.

\begin{theorem}[\cite{FULMAN}, Theorem 1.1]\label{FULMANth}
There exists an absolute constant $\delta>0$ such that the following holds.
Let $S$ be a finite simple group of Lie type of sufficiently large order, and let $x,y\in \operatorname{Aut}(S)$ with $x\neq 1$. Then, under the natural identification of $S$ with $\operatorname{Inn}(S)$ induced by conjugation, the probability that $x$ and a random element of $yS$ generate $\langle S,x,y\rangle$ is at least $\delta$.
\end{theorem}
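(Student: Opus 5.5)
The plan is to bound the failure probability by a sum over maximal subgroups. Put $H=\langle S,x,y\rangle$ and $z$ uniform in $yS$. Since $S\le H$ and $z$ ranges over a coset of $S$, the group $\langle x,z\rangle$ fails to be $H$ exactly when it lies in some maximal subgroup $M$ of $H$. Only those $M$ with $x\in M$ and $M\cap yS\neq\emptyset$ matter. If such an $M$ contains $S$, then $M\supseteq\langle S,x,y\rangle=H$, which is impossible. So every relevant $M$ satisfies $MS=H$, and for it
$$
\frac{|M\cap yS|}{|S|}=\frac{|M\cap S|}{|S|}=\frac{1}{|H:M|}.
$$
Grouping the relevant $M$ into $H$-classes, the number of conjugates of a fixed $M$ containing $x$ is $|H:M|\cdot\operatorname{fpr}(x,H/M)$, where $\operatorname{fpr}$ is the fixed point ratio. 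This gives
$$
\mathbb{P}\big(\langle x,z\rangle\neq H\big)\;\le\;\sum_{[M]}\operatorname{fpr}(x,H/M),
$$
with the sum over $H$-classes of maximal subgroups not containing $S$. The task is to show that the right-hand side, or a refined version of it, is at most $1-\delta$ once $|S|$ is large.

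\emph{Step 1: the non-subspace classes.} For every maximal subgroup that is not a subspace (parabolic or reducible) stabilizer, I will use the Liebeck--Saxl and Burness fixed point ratio bounds. These give $\operatorname{fpr}(x,H/M)\le |x^H|^{-c}$ for an absolute $c>0$, apart from a controlled list of exceptions. To sum over classes I will use the standard bound that the number of classes of maximal subgroups is polynomial in $r$ and $\log q$. When $|x^H|$ is large, this makes the total contribution $o(1)$.

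\emph{Step 2: $x$ has small class, or $M$ is a subspace stabilizer.} This is the genuine obstacle. If $x$ is, say, a transvection in a classical group of large rank, then $x$ fixes a positive proportion of the $k$-subspaces, and the union bound is useless. Here I will not sum fixed point ratios. Instead I will estimate directly the probability that a random $z\in yS$ stabilizes some $x$-invariant subspace (or flag) of the relevant type. For this I plan to use the Fulman--Guralnick estimates on derangements in cosets: for a random element of a coset of $S$ in $\operatorname{Aut}(S)$, the proportion of elements fixing a $k$-space is at most $1-\epsilon$, uniformly in $k$, $r$ and $q$. The argument runs via the cycle-index and characteristic-polynomial combinatorics, including the twisted version for field and graph automorphism cosets.

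First I will try to show that, for large $|S|$, with probability bounded away from $0$ the element $z$ has a characteristic polynomial (on the natural module) with no factor of degree matching an $x$-invariant subspace dimension that $x$ could share. For semisimple $x$ this gives irreducibility of $\langle x,z\rangle$. The remaining subspace-type overgroups are then ruled out by Step 1. Unipotent-type $x$, such as transvections, will need a separate count, since it may fix subspaces of every dimension.

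\emph{Step 3: bounded rank and exceptional groups.} In this case $q\to\infty$. Lang--Weil style counting shows that both $\operatorname{fpr}$ sums and the subspace obstructions are $O(1/q)$, except for the overgroups forced by $x$ itself. These forced overgroups have bounded number and are handled by Step 2. Combining the three steps gives failure probability at most $1-\delta$, and hence success probability at least $\delta$.
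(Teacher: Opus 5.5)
The paper does not prove this statement; it imports it from Fulman--Garzoni--Guralnick. So your proposal has to stand on its own, and it has a genuine gap exactly where the theorem has content.

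Your reduction $\mathbb{P}(\langle x,z\rangle\neq H)\le\sum_{[M]}\operatorname{fpr}(x,H/M)$ is correct and standard, and Steps 1 and 3 are the usual fixed-point-ratio input. The hard regime is unbounded rank over a bounded field with $x$ of bounded support, i.e.\ $\dim[V,x]$ bounded: transvections, reflections and the like. These are the Lie-type analogues of transpositions, for which the corresponding statement fails in $\mathrm{Sym}(n)$. In this regime the fixed point ratio sum exceeds $1$. For a transvection in $\mathrm{SL}_n(2)$, the $k$-space actions alone contribute about $\sum_k(2^{-k}+2^{-(n-k)})\approx 2$. Step 2, where you rightly locate the obstacle, does not supply a working argument, for three reasons.

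(i) Your degree criterion cannot give a probability bounded below. Take $x=R\oplus I_{n-2}$ with $R$ semisimple and irreducible on a $2$-space $P$ (e.g.\ $R$ of order $3$ when $q=2$). Then $C$ and $P\oplus C$, for $C\le C_V(x)$, are $x$-invariant subspaces of every dimension $1,\dots,n-1$. So your condition forces $\chi_z$ to be irreducible, an event of probability about $1/n$.

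(ii) Derangement bounds for the action on $k$-spaces, even uniform in $k$ and valid in cosets, control one $k$ at a time and do not involve $x$. They cannot bound the union over all $k$: the proportion of $z$ with no invariant subspace at all is about $1/n$. What is needed is a joint condition. For a transvection $x\colon v\mapsto v+\phi(v)a$, the common invariant subspaces are the $z$-invariant $W$ with $a\in W$ or $W\le\ker\phi$. So one must show that, with probability bounded below, $a$ is a cyclic vector of $z$ and $\phi$ is a cyclic vector of the dual action. Similarly, for $x=R\oplus I_{n-2}$ as above, one needs that $P$ generates $V$ as a $\langle z\rangle$-module and that $C_V(x)$ contains no nonzero $z$-invariant subspace. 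This cyclic-vector count is precisely what you defer as ``a separate count''.

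(iii) Irreducibility does not finish the job, because some non-reducible classes have non-decaying fixed point ratios for such $x$. For $H=\mathrm{Sp}_{2m}(2)$ and $x$ a transvection,
\[
\operatorname{fpr}\bigl(x,H/\mathrm{O}^{+}_{2m}(2)\bigr)+\operatorname{fpr}\bigl(x,H/\mathrm{O}^{-}_{2m}(2)\bigr)=\frac{2^{m-1}}{2^m+1}+\frac{2^{m-1}}{2^m-1}=\frac{2^{2m}}{2^{2m}-1}>1 .
\]
These are exactly the ``exceptions'' Step 1 sets aside. Step 2 only looks at invariant subspaces of the natural module and never returns to them. They need a separate argument, e.g.\ controlling the quadratic forms fixed by $z$ that take the value $1$ at $a$.

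As written, the proposal covers the regimes where a union bound already works and leaves the theorem's actual content unproved.
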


\begin{coroll}\label{FULMANcor}
    There exists an absolute constant $\delta >0$ such that the following holds. Let $G$ be a finite almost simple group with socle $S$ of Lie type of
    sufficiently large order, and let $x, y \in G$, with $x\neq 1$, such that
    $$
    \langle xS, yS \rangle = \frac{G}{S} \, .
    $$
    Then the probability that $x$ and a random element of $yS$ generate $G$ is at least $\delta$. Equivalently,
    $$
    \big| \{z\in yS : \langle x, z \rangle = G  \} \big|  \geq \delta|S| \, .
    $$
    Note that, since $G$ is not cyclic, in terms of the generating graph of $G$, using the notation introduced in Definition~\ref{degree}, the last inequality can be expressed as
    $$
    d_{yS}(x) \geq \delta|S| \, .
    $$
\end{coroll}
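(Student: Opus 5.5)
We deduce Corollary~\ref{FULMANcor} directly from Theorem~\ref{FULMANth}, using the identification of $G$ with a subgroup of $\operatorname{Aut}(S)$ containing $\operatorname{Inn}(S)\cong S$, as fixed in the subsection on almost simple groups. We take $\delta$ to be the absolute constant given by Theorem~\ref{FULMANth}.

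Let $x,y\in G\leq\operatorname{Aut}(S)$ with $x\neq 1$ and $\langle xS,yS\rangle=G/S$. The first step is to show that $\langle S,x,y\rangle=G$. The inclusion $\langle S,x,y\rangle\subseteq G$ is clear. For the reverse inclusion, take $g\in G$. Then $gS$ is a word in $xS$ and $yS$, so $g\in wS$ for some word $w$ in $x$ and $y$, and hence $g\in\langle S,x,y\rangle$.

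Next, we apply Theorem~\ref{FULMANth} for $|S|$ sufficiently large. It gives that a uniformly random $z\in yS$ satisfies $\langle x,z\rangle=\langle S,x,y\rangle=G$ with probability at least $\delta$. Since $|yS|=|S|$, this is the inequality $|\{z\in yS:\langle x,z\rangle=G\}|\geq\delta|S|$. The threshold on $|S|$ is the one provided by Theorem~\ref{FULMANth}, so it is uniform over the whole family, as the asymptotic conventions require.

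Finally, we translate this into the language of the generating graph. $G$ is non-cyclic, because it contains the non-abelian group $S$. By Definition~\ref{degree}, for non-cyclic $G$ we have $d_{A}(x)=|\{(x,a)\in\{x\}\times A:\langle x,a\rangle=G\}|$. Taking $A=yS$ gives $d_{yS}(x)\geq\delta|S|$.

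There is no real obstacle. The only point needing care is the equality $\langle S,x,y\rangle=G$, which converts the hypothesis on cosets into the form required by Theorem~\ref{FULMANth}.
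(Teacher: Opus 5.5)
Your proposal is correct and follows the paper's proof: you regard $G\leq\operatorname{Aut}(S)$ with $S=\operatorname{Inn}(S)$, deduce $G=\langle S,x,y\rangle$ from $\langle xS,yS\rangle=G/S$, and apply Theorem~\ref{FULMANth} with the same constant $\delta$. Your explicit coset-word argument for $\langle S,x,y\rangle=G$, and your translation into $d_{yS}(x)$ via non-cyclicity, spell out steps that the paper leaves implicit.
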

\begin{proof}
    Since $G$ is almost simple with socle $S$, we may regard $G$ as a subgroup of $\operatorname{Aut}(S)$, with $S$ identified with $\operatorname{Inn}(S)$ via the natural isomorphism induced by conjugation. By assumption, $xS$ and $yS$ generate $G/S$, so $G = \langle x, y, S \rangle$. Thus, the statement holds with the same constant of Theorem \ref{FULMANth}.
\end{proof}

In the framework of generating graphs, Corollary~\ref{FULMANcor} tells us that, for an almost simple group, once the order of the socle exceeds a certain threshold, every vertex is adjacent to (at least) a certain portion of the vertices of each suitable coset, where this portion is given by a universal constant.\\[-8pt]

\noindent For the rest of the paper $\delta$ will be the constant defined by Corollary~\ref{FULMANcor}.\\[1pt]

The second lower bound, specific to monolithic groups, concerns instead a ``global'' \mbox{$2$-generation} probability: both elements are chosen independently and uniformly at random from their respective prescribed cosets. First, we shall state the general result and then show how to derive from it the specific consequence that we need.
On this point, the background we recall below is essentially developed in~\cite{uniserial}, with an additional argument from~\cite{Gaschutz}.

\begin{definition}[$2$-generation probabilities]
    Let $G$ be a finite group. We denote by~$P_2(G)$ the probability that two elements chosen independently and uniformly at random from~$G$ generate~$G$, namely,
$$
P_2(G) \coloneqq \frac{ \big|\{(x,y)\in G^2 :  \langle x, y \rangle = G \} \big| }{|G|^2} \, .
$$
Moreover, if $S\trianglelefteq G$ and $P_2(G/S)>0$, we define
$$
P_2(G,S)\coloneqq \frac{P_2(G)}{P_2(G/S)} \, ,
$$
which can be regarded as the conditional probability that two elements chosen independently and uniformly at random from~$G$ generate~$G$, given that their natural images in~$G/S$ generate~$G/S$.

More generally, for any integer $d\geq 2$, one may define $P_d(G)$ and $P_d(G,S)$ analogously, by considering $d$-tuples of elements instead of pairs.
\end{definition}

\begin{definition}[The functions $\alpha$ and $\beta$ --- \cite{uniserial}, Section 2.3]
    Let $S$ be a finite simple group of Lie type of rank $r$ and field size $q$.
    Following \cite{uniserial}, let $\alpha$ be the positive real-valued function defined by
    $$
    \alpha(S) \coloneqq \left( q^{-r/15}+\frac{61}{15}|S|^{-3/4} \right)^{-1/2} \, .
    $$
    Moreover, for convenience, we also define a positive real-valued function $\beta$ by setting
    $$
    \beta(S) \coloneqq \big[ \alpha(S) \big]^{-1} \, .
    $$
    For every $R>0$, there are only finitely many pairs $(q,r)$ such that $q^r\leq R$, and each such pair gives rise to only finitely many finite simple groups $S$ of Lie type with rank $r$ and field size $q$. Therefore, $q^r$ can be made arbitrarily large by taking $|S|$ sufficiently large. Consequently,
    $$
    \lim_{|S|\to +\infty} \alpha(S) = +\infty \qquad \mbox{and} \qquad \lim_{|S|\to +\infty} \beta(S) = 0 \, .
    $$
    For completeness, we mention that the function $\alpha$ considered in \cite{uniserial} is defined for every finite simple group. Since we shall use it only for simple groups of Lie type, we do not need to recall its definition in the other cases.
\end{definition}

\begin{theorem}[\cite{uniserial}, Corollary 9]\label{HQstrongTh}
Let $d\geq 2$ be an integer. Let $G$ be a finite $d$-generated group with a unique minimal non-trivial normal subgroup $N$. Thus, we can write $N=S^n$, with $S$ a finite simple group. Then
$$
P_d(G,N) > 1 - \big[\alpha(S)\big]^{-n(d-\iota)} \, ,
$$
where $\iota=3/2$ if $S$ is abelian and $\iota =1$ otherwise.
\end{theorem}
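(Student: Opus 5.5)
By Gaschütz's lemma, the number of lifts of a generating $d$-tuple of $G/N$ that generate $G$ does not depend on the tuple. The proof would therefore start from a fixed $d$-tuple $(g_1N,\dots,g_dN)$ generating $G/N$ and write
$$
P_d(G,N)=\Pr\big(\langle g_1u_1,\dots,g_du_d\rangle=G\big),
$$
with $u_1,\dots,u_d\in N$ independent and uniform. If these elements fail to generate $G$, they lie in some maximal subgroup $M$ of $G$. Since $M$ contains a lift of every generator of $G/N$, we have $MN=G$, so $M$ is a maximal supplement of $N$.

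For such an $M$, each coset $g_iN$ meets $M$ in a coset of $M\cap N$. Hence the probability that $g_iu_i\in M$ is
$$
\frac{|M\cap N|}{|N|}=\frac{1}{|G:M|}.
$$
A union bound then gives
$$
1-P_d(G,N)\le \sum_{M} |G:M|^{-d},
$$
where $M$ runs over maximal subgroups of $G$ with $MN=G$. The rest of the proof consists in bounding this zeta-type sum. Grouping the terms by index, I would write
$$
\sum_M |G:M|^{-d}=\sum_{m}c_m\,m^{-d},
$$
where $c_m$ is the number of maximal supplements of index $m$. I would aim to show $c_m\le C\,m^{\iota}$ and $m\ge \mu^n$, with $C$ and $\mu$ depending only on $S$ and chosen so that
$$
\sum_m c_m m^{-d}\le \big(\text{const}\cdot \mu^{-1}\big)^{n(d-\iota)}\le \alpha(S)^{-n(d-\iota)}.
$$
This is the step where the precise form of $\alpha(S)$ would be dictated.

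The two cases would be handled separately.

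\emph{Abelian case.} Here $N\cong C_p^n$ is irreducible. Every maximal supplement is a complement, so it has index $p^n$. Complements are counted by derivations, whose number is controlled via $|H^1(G/N,N)|$. Using the standard bound on $H^1$ for irreducible modules, I would aim for $c_{p^n}\le |N|^{3/2}$, which yields $\iota=3/2$.

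\emph{Non-abelian case.} I would follow the Aschbacher--Scott/O'Nan--Scott description of maximal supplements. There are two kinds:
\begin{itemize}
\item \emph{Product type}, where $M\cap N$ is built from a maximal subgroup of $S$ up to conjugation in $S^n$. The index is then at least $m(S)^n$, where $m(S)$ is the minimal index of a proper subgroup of $S$. For Lie type, $m(S)\ge$ roughly $q^{r}$, which explains the term $q^{-r/15}$ in $\alpha$. The number of such supplements is at most $m^{1+o(1)}$ times the number of classes of maximal subgroups.
\item \emph{Diagonal type}, of index $|S|^{n/2}$ or more. These account for the term $|S|^{-3/4}$ in $\alpha$.
\end{itemize}
In both sub-cases this gives $c_m\le m\cdot(\text{small})$, hence $\iota=1$.

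The main obstacle is this last counting step in the non-abelian case. It requires the uniform, explicit bounds for Lie-type $S$ that produce exactly the constants $1/15$ and $61/15$ in $\alpha$, namely bounds on the number of conjugacy classes of maximal subgroups and on $\sum_{H\,\mathrm{max}}|S:H|^{-s}$. It also requires controlling the action of $G$ on the $n$ factors without losing more than a factor of order $m$. My first attempt there would be to reduce to the case $n=1$, as an almost simple problem, by working inside $N_G(S_1)/C_G(S_1)$, and then raise to the $n$-th power.
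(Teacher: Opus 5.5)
The paper does not prove this statement. It is quoted from Harper--Quick \cite{uniserial} and used only through Corollary~\ref{beta}, i.e.\ for $n=1$, $d=2$ and $S$ of Lie type, so your proposal must stand on its own.

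Its first half is sound. Gasch\"utz's lemma lets you fix a generating tuple of cosets, and any non-generating lift lies in a maximal $M$ with $MN=G$. Each $g_iN\cap M$ is a coset of $M\cap N$, so the union bound gives $1-P_d(G,N)\le\sum_M|G:M|^{-d}$. The abelian case also goes through, provided the $H^1$ bound you invoke is the Guralnick--Hoffman bound $|H^1(G/N,N)|\le|N|^{1/2}$ for the faithful irreducible action. You must first discard the case $N\le\Phi(G)$, where there are no supplements. The classical bound $|H^1|<|N|$ gives only fewer than $|N|^2$ complements, which says nothing at $d=2$.

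\textbf{The gap is the non-abelian case}, which is exactly what the paper uses and which you explicitly leave open. Already for $n=1$, $d=2$ your route needs an explicit inequality, uniform over all almost simple $A$ with socle $S$ of Lie type: $\sum_X|A:X|^{-2}<\alpha(S)^{-1}=\bigl(q^{-r/15}+\tfrac{61}{15}|S|^{-3/4}\bigr)^{1/2}$, summed over maximal $X<A$ with $XS=A$. That is a substantial theorem in itself. It needs the classification of maximal subgroups of almost simple groups of Lie type, together with explicit bounds on the number and indices of their conjugacy classes. You neither prove nor cite such a result, and your comments on where $1/15$, $61/15$ and $3/4$ come from are guesses, not derivations.

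For $n\ge 2$ more is missing:
\begin{itemize}
\item explicit counts for diagonal-type supplements;
\item explicit counts for twisted-wreath-type complements ($M\cap N=1$, index $|S|^n$), which your list omits;
\item a check that all contributions together stay strictly below $\alpha(S)^{-n(d-1)}$.
\end{itemize}
Alternating and sporadic $S$, which the quoted statement also covers, are not addressed.

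The bookkeeping you sketch also fails as written. From $c_m\le Cm^{\iota}$ and $m\ge\mu^n$ you get, for $d=2$ and $\iota=1$, only $C\sum_m m^{-1}$ over the occurring indices. That is neither small nor an $n$th power unless you control how many classes occur, and an $m^{o(1)}$ loss is incompatible with an exact inequality.

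The correct form of your ``raise to the $n$th power'' is per conjugacy class. With $A=N_G(S_1)/C_G(S_1)$, each $A$-class of $X$ yields at most $|A:X|^n$ product-type supplements, each of index $|A:X|^n$. Their total contribution is therefore at most $\sum_{\text{classes}}|A:X|^{-n(d-1)}\le\bigl(\sum_X|A:X|^{-d}\bigr)^n$. This does reduce the product-type case to $n=1$, but it also shows that everything rests on the almost simple estimate you have not supplied. As it stands, the proposal reduces the theorem to its hardest part rather than proving it.
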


\begin{coroll}\label{beta}
    Let $G$ be a finite almost simple group with socle $S$ of Lie type. Then
    $$
    P_2(G,S) > 1-\beta(S) \, .
    $$
\end{coroll}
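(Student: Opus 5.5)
The plan is to apply Theorem~\ref{HQstrongTh} with $d=2$ and $N=S$. The first step is to check its hypotheses for the almost simple group $G$, with socle $S$ of Lie type.

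Since $S$ is non-abelian simple, $G$ has the unique minimal non-trivial normal subgroup $N=S$. We may write it as $N=S^n$ with $n=1$, and we have $\iota=1$.

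The theorem also requires $G$ to be $2$-generated. If $G/S$ is cyclic, this is exactly Lemma~\ref{d(G)=2}. However, the corollary as stated does not assume $G/S$ cyclic, so I would handle this point more carefully.

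\emph{First case: $d(G)\le 2$.} Theorem~\ref{HQstrongTh} gives
$$
P_2(G,S) > 1-\alpha(S)^{-1\cdot(2-1)} = 1-\beta(S) \, .
$$
Note that $P_2(G,S)$ is well defined here. Indeed, $G/S$ is $2$-generated, since it is a quotient of $G$, so $P_2(G/S)>0$.

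\emph{Second case: $d(G)>2$.} Then $G/S$ is not $2$-generated, because $d(G)=\max\{2,d(G/S)\}$ by Theorem~\ref{LucchMen}. Hence $P_2(G/S)=0$ and $P_2(G,S)$ is undefined. So implicitly the statement concerns only groups for which $P_2(G,S)$ makes sense, and there $d(G)=2$ automatically.

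To make the argument clean, I would say this explicitly: the hypothesis that $P_2(G,S)$ is defined, namely $P_2(G/S)>0$, forces $G/S$ to be $2$-generated. Theorem~\ref{LucchMen} then gives $d(G)=2$, since $G$ is non-cyclic, being non-solvable.

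The only subtle step is this bookkeeping about $d$-generation and the definedness of $P_2(G,S)$. The rest is direct substitution of $d=2$, $n=1$, $\iota=1$ and $\beta=\alpha^{-1}$ into Theorem~\ref{HQstrongTh}.
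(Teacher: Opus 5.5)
Your proof is correct and follows the paper's route: apply Theorem~\ref{HQstrongTh} with $d=2$, $N=S$, $n=1$, $\iota=1$, so that the bound becomes $1-\alpha(S)^{-1}=1-\beta(S)$. The only difference is how you show $G$ is $2$-generated. The paper cites Lemma~\ref{d(G)=2}, which tacitly uses the standing hypothesis that $G/S$ is cyclic, a hypothesis absent from the corollary's statement. You instead derive $d(G)=2$ from Theorem~\ref{LucchMen}, using only the $2$-generation of $G/S$ that is implicit in $P_2(G,S)$ being defined; this is slightly more careful and covers exactly the cases where the statement makes sense.
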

\begin{proof}
By Lemma~\ref{d(G)=2}, $G$ is $2$-generated. Therefore, the statement follows from Theorem~\ref{HQstrongTh}, whose parameters under the present hypotheses are $n=1$, $N=S$, $d=2$, and~\mbox{$\iota=1$.}
\end{proof}

\begin{lemma}[\cite{Gaschutz},
see proof of Satz 1]\label{LemmaGaschutz1}
Let $G$ be a group and $S$ a finite normal subgroup of $G$. Assume that $G/S$ is $2$-generated. Then there exists a non-negative integer constant $\lambda$ such that, for every generating pair $(aS,bS)$ of $G/S$, one has
$$
\big|\{(x,y)\in aS\times bS : \langle x,y \rangle = G \}\big| = \lambda \, .
$$
In other words, this cardinality is independent of the particular generating pair $(aS,bS)$ of $G/S$ considered.
\end{lemma}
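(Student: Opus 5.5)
The plan is to compare the generating-pair counts for two arbitrary generating pairs of $G/S$ through a bijection induced by an automorphism of $G$, in the spirit of Gaschütz's argument. Fix two generating pairs $(aS,bS)$ and $(a'S,b'S)$ of $G/S$.

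The first step is to reduce to counting all lifts of a generating pair. For $(x,y)\in aS\times bS$, the condition $\langle x,y\rangle=G$ only involves pairs of elements of $G$ whose images are $(aS,bS)$. So the quantity to control is
$$
\phi(aS,bS)\coloneqq\big|\{(x,y)\in aS\times bS:\langle x,y\rangle=G\}\big| ,
$$
and I want to show it is the same for every generating pair of $G/S$.

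The main step is Gaschütz's counting by induction over subgroups. For any subgroup $H\leq G$ with $HS=G$, consider pairs $(x,y)\in aS\times bS$ with $\langle x,y\rangle\leq H$. Since $HS=G$, the cosets $aS,bS$ meet $H$, in cosets of $H\cap S$. So the number of such pairs equals
$$
\big|\{(x,y)\in (aS\cap H)\times(bS\cap H)\}\big|=|H\cap S|^2 ,
$$
which is independent of $(aS,bS)$. Grouping these pairs by the exact subgroup they generate gives
$$
|H\cap S|^2=\sum_{K\leq H,\ KS=G}\phi_K(aS,bS),
$$
where $\phi_K$ counts pairs in $aS\times bS$ generating exactly $K$. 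Note that $\langle x,y\rangle S=G$ automatically holds, because $(aS,bS)$ generates $G/S$. This is a triangular system over the finite poset of subgroups $K$ with $KS=G$, which is finite because $G$ is determined up to finitely many subgroups containing some transversal.

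The subtle point is that $G$ itself may be infinite. The set of subgroups $K$ with $KS=G$ generated by two elements is nonetheless finite relative to the relevant structure: each such $K$ is generated by a pair in $aS\times bS$, and there are only $|S|^2$ such pairs. Möbius inversion on the poset (equivalently, induction downward from minimal elements) then shows that $\phi_K(aS,bS)$ depends only on $K$ and not on $(aS,bS)$.

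To handle the finiteness cleanly, I would instead run the induction on the set of subgroups $K$ with $KS=G$ and $|K:K\cap S|$ finite-index data. Alternatively, the induction can go on $|K\cap S|$, which is finite and bounded by $|S|$: define $\phi_K$ and prove by induction on $|K\cap S|$ that it is constant. The base case and inductive step both follow from the displayed identity applied with $H=K$.

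Taking $K=G$ gives the constant $\lambda=\phi_G$.

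The main obstacle is precisely this bookkeeping when $G$ is infinite. The sum must be finite and indexed by a well-founded order. Using the bound $|K\cap S|\le|S|$ together with the fact that the relevant $K$ all arise as $\langle x,y\rangle$ for the finitely many pairs in some $aS\times bS$ should suffice. In our applications $G$ is finite anyway, so this case causes no difficulty.
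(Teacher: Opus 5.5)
Your argument is correct and is precisely Gaschütz's counting argument, to which the paper defers. The identity $|H\cap S|^2=\sum_{K\le H,\,KS=G}\phi_K(aS,bS)$, combined with strong induction on $|K\cap S|$, shows that every $\phi_K$, in particular $\lambda=\phi_G$, is independent of $(aS,bS)$.

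The one step you leave implicit is that $K'<K$ with $K'S=G$ forces $|K'\cap S|<|K\cap S|$; this follows from Dedekind's law, which gives $K=K'(K\cap S)$. With this, the induction step only subtracts a sum of non-negative constants bounded by $|K\cap S|^2$, so no finiteness of the poset is needed. Your unjustified remarks about an automorphism bijection, and about every relevant $K$ being generated by a pair from the fixed $aS\times bS$, can simply be dropped.
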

\begin{proof}
    This fact is established in the proof of Satz~1 in \cite{Gaschutz}. Indeed, Gaschütz proves it in order to derive the statement of Satz~1.
\end{proof}

\begin{coroll}\label{conditionalprobability}
    Let $G$ be a finite group and $S\trianglelefteq G$. Assume that $G/S$ is $2$-generated. Then, for every generating pair $(aS,bS)$ of $G/S$, one has
$$
P_2(G,S) = \frac{ \big| \{(x,y)\in aS\times bS : \langle x, y \rangle = G \} \big| }{|S|^2} \, .
$$
\end{coroll}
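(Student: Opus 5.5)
The plan is a double count of generating pairs over cosets, with Lemma~\ref{LemmaGaschutz1} as the decisive input. Since $P_2(G,S)$ is defined only when $P_2(G/S)>0$, and the hypothesis that $G/S$ is $2$-generated guarantees this, the quotient is well defined. Let $\lambda$ be the constant from Lemma~\ref{LemmaGaschutz1}: for every generating pair $(aS,bS)$ of $G/S$, exactly $\lambda$ pairs in $aS\times bS$ generate $G$. The goal is then to show $P_2(G,S)=\lambda/|S|^2$.

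First I will partition the set of generating pairs of $G$ according to the cosets of their entries. If $\langle x,y\rangle=G$, then $\langle xS,yS\rangle=G/S$. Hence every generating pair $(x,y)\in G^2$ lies in $aS\times bS$ for exactly one pair of cosets $(aS,bS)$, and that pair generates $G/S$. Conversely, pairs of cosets that do not generate $G/S$ contain no generating pairs of $G$.

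Next let $m$ be the number of generating pairs of $G/S$, so that $P_2(G/S)=m/|G/S|^2$. Summing over these $m$ coset pairs and applying Lemma~\ref{LemmaGaschutz1} gives
$$
\big|\{(x,y)\in G^2:\langle x,y\rangle=G\}\big| = m\lambda ,
$$
and therefore $P_2(G)=m\lambda/|G|^2$.

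Finally I will divide, using $|G|=|G/S|\,|S|$:
$$
P_2(G,S)=\frac{m\lambda/|G|^2}{m/|G/S|^2}=\frac{\lambda}{|S|^2}.
$$
This is exactly the stated expression for any chosen generating pair $(aS,bS)$.

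The only delicate point is that the argument depends on $\lambda$ being independent of the coset pair. That independence is supplied entirely by Gaschütz's lemma, and once it is available the rest is bookkeeping.
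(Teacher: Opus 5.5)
Your proposal is correct and is essentially the paper's proof. Both partition the generating pairs of $G$ by the coset pairs they lie in, and use Gasch\"utz's constant $\lambda$ to get $P_2(G)=m\lambda/|G|^2$. Both then divide by $P_2(G/S)=m/|G/S|^2$ to obtain $\lambda/|S|^2$.
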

\begin{proof}
Using Lemma \ref{LemmaGaschutz1}, we have
$$
\begin{array}{l}
\displaystyle \big|\{(x,y)\in G^2 : \langle x,y\rangle = G\}\big| \, \, = \sum_{\substack{\alpha,\beta\in G/S\\ \langle \alpha, \beta \rangle = G/S}} \big|\{ (x,y)\in \alpha\times \beta : \langle x,y\rangle = G \}\big| \, =\\[35pt]
\displaystyle = \sum_{\substack{\alpha,\beta\in G/S\\
\langle \alpha, \beta \rangle = G/S}} \!\!\!\! \lambda \,  = \, \displaystyle \lambda \cdot \big|\{(\alpha,\beta)\in (G/S)^2 : \langle \alpha, \beta\rangle = G/S\} \big| \, .
\end{array}
$$
Therefore, we obtain
$$\begin{array}{l}
\displaystyle
P_2(G,S) \, = \, \frac{P_2(G)}{P_2(G/S)} \, = \, \dfrac{\dfrac{\big|\{(x,y)\in G^2 : \langle x,y\rangle = G\}\big|}{|G|^2}}{\dfrac{\big|\{(\alpha,\beta)\in (G/S)^2 : \langle \alpha, \beta\rangle = G/S\} \big|}{|G/S|^2}} \, =\\[35pt]
\displaystyle = \, \frac{\lambda \cdot \big|\{(\alpha,\beta)\in (G/S)^2 : \langle \alpha, \beta\rangle = G/S\} \big|}{\big|\{(\alpha,\beta)\in (G/S)^2 : \langle \alpha, \beta\rangle = G/S\} \big|\cdot |S|^2} \, = \, \frac{\lambda}{|S|^2} \, =\\[35pt]
\displaystyle = \, \frac{\big|\{(x,y)\in aS\times bS : \langle x, y \rangle = G \}\big|}{|S|^2} \, ,
\end{array}
$$
for any fixed generating pair $(aS, bS)$ of $G/S$.
\end{proof}
    
\noindent We are now ready to state the specific consequence that we shall need in our proof.

\begin{coroll}\label{coset-beta}
    For any fixed pair $(aS, bS)$ of elements generating $G/S$, we have
$$
\frac{\big|\{(x,y)\in aS\times bS : \langle x, y \rangle = G \}\big|}{|S|^2} > 1 -\beta(S) \, ,
$$
or, equivalently,
 $$
\frac{\big|\{(x,y)\in aS\times bS : \langle x, y \rangle \neq G \}\big|}{|S|^2} < \beta(S) \, .
$$
\end{coroll}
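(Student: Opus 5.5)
The statement is a direct combination of Corollary~\ref{conditionalprobability} with Corollary~\ref{beta}. Throughout, $G$ is understood, as in Corollary~\ref{beta}, to be a finite almost simple group with socle $S$ of Lie type. The existence of a generating pair $(aS,bS)$ of $G/S$ forces $G/S$ to be $2$-generated, so we are in the situation of Corollary~\ref{conditionalprobability}.

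The first step is to apply Corollary~\ref{conditionalprobability} with this $S\trianglelefteq G$. For the fixed generating pair $(aS,bS)$ of $G/S$ it gives
$$
P_2(G,S) = \frac{\big|\{(x,y)\in aS\times bS : \langle x,y\rangle = G\}\big|}{|S|^2} \, .
$$
Note that $P_2(G/S)>0$ because $G/S$ is $2$-generated, so $P_2(G,S)$ is well defined.

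The second step is to invoke Corollary~\ref{beta}, which yields $P_2(G,S)>1-\beta(S)$. Substituting this into the identity from the first step gives the first inequality of the statement.

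The equivalent formulation follows by complementation. The set $aS\times bS$ has exactly $|S|^2$ elements and is partitioned into the pairs that generate $G$ and the pairs that do not. Hence the proportion of non-generating pairs equals $1$ minus the proportion of generating pairs, which is less than $\beta(S)$.

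There is no real obstacle. The only points needing attention are that the hypotheses of Corollary~\ref{conditionalprobability} are met, namely that $G/S$ is $2$-generated, and that the implicit standing hypothesis on $G$ is the one from Corollary~\ref{beta}. No largeness assumption on $|S|$ is needed here, since Corollary~\ref{beta} holds unconditionally.
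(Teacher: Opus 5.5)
Your proposal is correct and is exactly the paper's argument: Corollary~\ref{conditionalprobability} identifies $P_2(G,S)$ with the proportion of generating pairs in $aS\times bS$, Corollary~\ref{beta} bounds that quantity below by $1-\beta(S)$, and the second inequality follows by complementation in $aS\times bS$. One small precision: Corollary~\ref{beta} is not literally unconditional, since its proof needs $G$ to be $2$-generated. That holds here because $G/S$ is $2$-generated, so Theorem~\ref{LucchMen} gives $d(G)=2$. You are right that no largeness of $|S|$ is needed.
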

\begin{proof}
    It is an immediate consequence of Lemma \ref{beta} and Corollary \ref{conditionalprobability}.
\end{proof}

\section{The proof of Theorem A}
Throughout this section, $G$ denotes a finite almost simple group with socle $S$ of Lie type such that $G/S$ is cyclic. Moreover, we shall assume that $|G|$ is as large as needed. This is equivalent to assuming that $|S|$ is as large as needed. Indeed, if $|S|$ were bounded, then there would be only finitely many possibilities for $S$, and, since $G\leq \operatorname{Aut}(S)$, $|G|$ would also be bounded. Conversely, it suffices to note that $|S|\leq |G|$. Under these assumptions, we shall prove that $\Gamma(G)$ has a Hamiltonian cycle, thus establishing Theorem A.\\[-5pt]

\noindent \textbf{Idea of the proof.} The engine of the proof is the Harper-Quick estimate given by Corollary~9 in~\cite{uniserial}, which in our setting takes the form of Corollary~\ref{coset-beta}.
This powerful result establishes that, in the generating graph~$\Gamma(G)$, provided that $S$ is large enough, there is a high density of edges between any two cosets that generate~$G/S$. The main idea is to exploit this high density to construct a Hamiltonian cycle explicitly. More precisely, after fixing an ordered sequence of cosets such that each pair of consecutive cosets generates~$G/S$, we would like to construct such a cycle by taking vertices cyclically from these cosets, in the prescribed order. Unfortunately, this cannot be done quite so directly. To achieve our goal, we exploit the high edge density to find perfect matchings between consecutive cosets in the sequence. By juxtaposing these perfect matchings, we naturally obtain a collection of disjoint paths, made up of one vertex from each coset in the prescribed order. Then, joining these paths leads to the construction of a Hamiltonian cycle.
The natural tool for finding these perfect matchings is Hall's marriage theorem. However, high edge density alone is not sufficient, since an application of Hall's marriage theorem also requires a certain pointwise regularity of adjacency. We therefore first remove from the cosets those vertices whose adjacency behaviour is not sufficiently regular, thus distinguishing between bad and good vertices. Then, the bad vertices are incorporated into a dedicated path~$P_B$, whose construction is made possible by the minimum pointwise regularity of adjacency guaranteed by Corollary~\ref{FULMANcor} (consequence of the Fulman-Garzoni-Guralnick Theorem 1.1 in~\cite{FULMAN}). Next, the remaining good vertices are organized into a path~$P_K$, constructed by means of Hall's marriage theorem. Finally, the two paths are joined together, yielding the desired Hamiltonian cycle.

\subsection{Distinguished pairs of cosets}
By hypothesis, $G/S$ is cyclic, so choose $g\in G$ such that
$$
G/S = \langle gS \rangle \, ;
$$
also, let
$$
n \coloneqq |G/S| \, .
$$
Hence, $G$ is the disjoint union of the cosets $g^0S=S, g^1S= gS, g^2S, \ldots, g^{n-1}S$.
For notational convenience, we set
$$
C_i \coloneqq g^iS \, ,
$$
for every $i\in\{0,1,\ldots, n-1\}$, so
$$
G = C_0 \sqcup C_1 \sqcup \ldots \sqcup C_{n-1} \, .
$$
Note that $C_0=S$. Since the identity is not a vertex of $\Gamma(G)$, we have
$$
V\big(\Gamma(G)\big) = \big(C_0\setminus\{1\}\big) \sqcup C_1 \sqcup \ldots \sqcup C_{n-1} \, ,
$$
where $V\big(\Gamma(G)\big)$ is the set of vertices of $\Gamma(G)$.

Given a non-empty set $A$ and elements $a,b\in A$, not necessarily distinct, we shall denote by $\big[(a,b)\big]$ the unordered pair consisting of $a$ and $b$ (formally, $\big[(a,b)\big]$ is the equivalence class of $(a,b)$ in the quotient of $A^2$ by the equivalence relation that identifies $(x,y)$ with $(y,x)$ for all $x,y\in A$).

\begin{sloppypar}
Now, we single out certain unordered pairs of cosets that generate $G/S$ (the ordering is irrelevant for our purposes). 
We shall refer to the following unordered pairs of cosets as \textit{the distinguished pairs (of cosets)}:
\begin{itemize}
   \item[$\scriptstyle\bullet$] $\big[(C_i,C_{i+1})\big]$ for each  $i\in\{0,1,\ldots,n-2\}$,
    \item[$\scriptstyle\bullet$] $\big[(C_{n-1},C_0)\big]$,
    \item[$\scriptstyle\bullet$] $\big[(C_1,C_{n-1})\big]$.
\end{itemize}
They are all generating pairs of $G/S$ (the first family because $(i,i+1,n)=1$, while the remaining two pairs because $C_{n-1}$ is a generator), hence, the estimates in Corollaries~\ref{FULMANcor} and~\ref{coset-beta} apply to them. Note that if $n=2$, then the only distinguished pairs are $\big[(C_0,C_1)\big]$ and $\big[(C_1,C_1)\big]$, and this is the only case where there is a distinguished pair with a same coset repeated.
\end{sloppypar}

\subsection{Good vertices and Bad vertices}
In this subsection, we identify a subset of ``bad vertices'' of $\Gamma(G)$, consisting of those vertices that weaken the internal connections of at least one distinguished pair. By contrast, we refer to the remaining vertices as ``good vertices''. Moreover, we prove that, as the size of the group grows, the bad vertices form an arbitrarily small fraction of all vertices. In the following subsections, we shall then handle these two types of vertices separately in order to construct a Hamiltonian cycle in $\Gamma(G)$.\\[-5pt]

\begin{definition}[Good vertices and Bad vertices]
Let $\delta$ be the universal positive constant defined by Corollary \ref{FULMANcor}, and choose a constant $\mu$ such that:
$$
0 < \mu < \min\left\{\delta, \frac{1}{4} \right\} \, .
$$

\begin{sloppypar}
Let $x\in C_i$ for some $i\in\{0,1,\ldots,n-1\}$, with $x\neq 1$.
In the generating graph $\Gamma(G)$, $x$ is called a \textit{bad vertex} if there exists a coset $C_j$, for some $j\in\{0,1,\ldots,n-1\}$, such that $\big[(C_i,C_j)\big]$ is a distinguished pair and
$$
d_{C_j}(x) \leq (1-\mu)|C_j| = (1-\mu)|S| \, .
$$
For any such $j$, we also say that $x$ is a bad vertex towards $C_j$.

\noindent Otherwise, $x$ is said to be a \textit{good vertex}, meaning that for every coset $C_j$, with \mbox{$j\in \{0,1,\ldots,n-1\}$,} such that $\big[(C_i, C_j)\big]$ is a distinguished pair, we have
$$
d_{C_j}(x) > (1-\mu)|C_j| = (1-\mu)|S| \, .
$$
Since $\mu$ is to be regarded as a small positive constant, this definition selects as good those vertices adjacent in $\Gamma(G)$ to almost every vertex across each distinguished pair involving their own coset. Moreover, the loss of degree is less than~$\mu|S|$, so the choice $\mu<\delta$, together with Corollary~\ref{FULMANcor}, ensures that a good vertex of $C_i$ and a bad vertex of~$C_j$ have a common neighbour in a third coset~$C_k$, whenever the pairs $\big[(C_i,C_k)\big]$ and $\big[(C_j,C_k)\big]$ are distinguished. We shall need this fact in Subsection~\ref{BpathSubSec}.

Denote by $B$ the set of bad vertices and by $K$ the set of good vertices.
\end{sloppypar}
\end{definition}

\begin{lemma}\label{BetaBehaviorGen}
    Let $\beta$ be the function defined in Lemma \ref{beta}, and let $M$ be a positive integer. Then
    $$
    \lim_{|S|\to +\infty} n^M\beta(S) = 0 \, .
    $$
\end{lemma}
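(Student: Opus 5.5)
We need $n^M\beta(S)\to 0$, where $n=|G/S|<\log_2|S|$ by Lemma~\ref{ColvaLemma}. The plan is to express everything in terms of $r$ and $q$ using Lemma~\ref{PyberLemma}: $\log_2|S|\le 8r^2\log_2 q$. Hence
$$
n^M\beta(S) < (8r^2\log_2 q)^M\beta(S).
$$
It then suffices to show that $(8r^2\log_2 q)^M\beta(S)\to 0$.

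Next we bound $\beta(S)$ explicitly:
$$
\beta(S)=\left(q^{-r/15}+\tfrac{61}{15}|S|^{-3/4}\right)^{1/2}\le q^{-r/30}+\left(\tfrac{61}{15}\right)^{1/2}|S|^{-3/8}.
$$
This uses $\sqrt{a+b}\le\sqrt a+\sqrt b$. We treat the two terms separately.

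\emph{Second term.} Here $(\log_2|S|)^M|S|^{-3/8}\to 0$ as $|S|\to\infty$, since a power of a logarithm is dominated by any positive power. Combined with $n<\log_2|S|$, this handles the second term directly, without appealing to Lemma~\ref{PyberLemma}.

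\emph{First term.} We must show $(8r^2\log_2 q)^M q^{-r/30}\to 0$. Set $t=r\log_2 q=\log_2(q^r)$. Since $q\ge2$ and $r\ge1$, we have $r\le t$ and $\log_2 q\le t$, so $8r^2\log_2 q\le 8t^3$. Also $q^{-r/30}=2^{-t/30}$. The quantity is therefore at most $8^M t^{3M}2^{-t/30}$, which tends to $0$ as $t\to\infty$.

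It remains to justify that $t=\log_2(q^r)\to\infty$ as $|S|\to\infty$. This is exactly the observation recorded in the definition of $\alpha$: there are only finitely many $S$ with $q^r$ bounded. Given $\varepsilon>0$, we choose $T$ with $8^M t^{3M}2^{-t/30}<\varepsilon/2$ for all $t\ge T$. Only finitely many $S$ have $q^r<2^T$, so taking $|S|$ larger than all of these gives the uniform threshold $s_0$ required by our asymptotic conventions. The second term is handled similarly with its own threshold.

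The only delicate point is uniformity across the family: the threshold must be independent of $G$. This is secured because all bounds depend only on $|S|$ (through $n<\log_2|S|$), or on $q^r$, which is forced large by $|S|$. I expect no real obstacle beyond bookkeeping the two terms.
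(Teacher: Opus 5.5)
Your proposal is correct and follows essentially the paper's argument. You split $\beta(S)$ via $\sqrt{a+b}\le\sqrt a+\sqrt b$, bound $n$ by $\log_2|S|$ for the second term and by $8r^2\log_2 q$ (Lemma~\ref{PyberLemma}) for the first, and use $r\log_2 q\to+\infty$. The differences are cosmetic: you use the cruder bound $8r^2\log_2 q\le 8t^3$ where the paper gets $8t^2$, and you justify $t\to+\infty$ via the finiteness remark in the definition of $\alpha$ rather than via Lemma~\ref{PyberLemma}. Your opening reduction to $(8r^2\log_2 q)^M\beta(S)$ is not what you actually use for the second term, but this is harmless since you bound that term directly with $n<\log_2|S|$.
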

\begin{proof}
Let $r$ and $q$ be, resp., the rank and the field size associated to the finite simple group of Lie type $S$. We have
$$
\beta(S) = \left( q^{-r/15}+\frac{61}{15}|S|^{-3/4} \right)^{1/2} \leq \left( q^{-r/15} \right)^{1/2} +\left( \frac{61}{15}|S|^{-3/4} \right)^{1/2} = q^{-r/30} + \sqrt{\frac{61}{15}} |S|^{-3/8} \, ,
$$
so
$$
n^M\beta(S) = n^Mq^{-r/30} + n^M\sqrt{\frac{61}{15}} |S|^{-3/8} \, .
$$
We now analyze separately the two addends of the right-hand side.

For the first addend, by Lemmas \ref{PyberLemma} and \ref{ColvaLemma}, we have
$$\begin{array}{ll}
n^M q^{-r/30} & \leq \big(\log_2|S|\big)^Mq^{-r/30} \leq \big(\log_2 q^{8r^2}\big)^Mq^{-r/30} = 8^M \big(r^2\log_2 q)^M \, 2^{-(r\log_2 q)/30} \\[20pt]
\ & \leq 8^M (r\log_2 q)^{2M} \, 2^{-(r\log_2 q)/30} \quad \displaystyle \xrightarrow[|S| \to +\infty ]{} \quad 0 \, ,
\end{array}
$$
since
$$
\lim_{|S|\to +\infty} r\log_2 q = +\infty \quad \mbox{ and } \quad \lim_{x\to +\infty} x^{2M}\, 2^{-x/30} = 0 
$$

\noindent(if $r\log_2 q$ were bounded above by a positive constant $C$, then $\log_2(q^{8r^2})=8r^2\log_2q$ would be bounded above by $8C^2$, and so $|S| \leq q^{8r^2} \leq 2^{8C^2}$, a contradiction).

For the second addend, by Lemma \ref{ColvaLemma} we have
$$
n^M\sqrt{\frac{61}{15}} |S|^{-3/8} \leq \sqrt{\frac{61}{15}}\big(\log_2|S|\big)^M \, |S|^{-3/8} \xrightarrow[|S| \to +\infty ]{} \quad 0 \, .
$$
The statement follows.
\end{proof}

\begin{lemma}\label{estimationBgen}
    The following estimate holds:
    $$
    |B| \leq  \frac{2(n+1)\beta(S)}{\mu}|S| \,  .
    $$
    In particular, 
    $$
    \lim_{|S| \to +\infty } \frac{|B|}{|S|} = 0 \, .
    $$
    In other words, asymptotically as $|S| \to +\infty$, the number of bad vertices is negligible compared to the size of $S$.
\end{lemma}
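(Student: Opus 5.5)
We propose a double counting (Markov-type) argument over the distinguished pairs, using Corollary~\ref{coset-beta}. First we bound, for each distinguished pair, the number of vertices that are bad towards the other coset of the pair. Since the bad vertices are precisely those that are bad towards some coset forming a distinguished pair with their own coset, $B$ is then contained in the union of these sets.

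Fix an ordered pair $(C_i,C_j)$ such that $\big[(C_i,C_j)\big]$ is distinguished. Let $B_{i\to j}$ be the set of vertices $x\in C_i\setminus\{1\}$ with $d_{C_j}(x)\leq(1-\mu)|S|$. Since $G$ is non-cyclic (Lemma~\ref{d(G)=2}), Definition~\ref{degree} shows that $|S|-d_{C_j}(x)$ counts exactly the pairs $(x,y)\in\{x\}\times C_j$ with $\langle x,y\rangle\neq G$. For $x\in B_{i\to j}$ this number is at least $\mu|S|$. Summing over $x\in B_{i\to j}$ and applying Corollary~\ref{coset-beta} to the generating pair $(C_i,C_j)$ gives
$$
\mu|S|\cdot|B_{i\to j}| \leq \big|\{(x,y)\in C_i\times C_j : \langle x,y\rangle\neq G\}\big| < \beta(S)|S|^2 ,
$$
so $|B_{i\to j}|<\beta(S)|S|/\mu$.

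Next we count ordered pairs. The unordered distinguished pairs are the $n$ pairs $\big[(C_i,C_{i+1})\big]$ (indices mod $n$) together with $\big[(C_1,C_{n-1})\big]$, so there are at most $n+1$ of them. Each unordered pair yields at most two ordered pairs $(C_i,C_j)$; when $i=j$, as in the case $n=2$, it yields only one. Therefore
$$
B\subseteq\bigcup_{(i,j)} B_{i\to j},\qquad |B|\leq 2(n+1)\frac{\beta(S)}{\mu}|S|.
$$
The limit statement follows from Lemma~\ref{BetaBehaviorGen} with $M=1$: $n\beta(S)\to0$, and $\beta(S)\to0$ as well. Since $\mu$ is a fixed constant, $|B|/|S|\leq \frac{2}{\mu}\big(n\beta(S)+\beta(S)\big)\to 0$.

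There is no serious obstacle. The only delicate point is that $d_{C_j}(x)$ must correspond exactly to generating pairs, with no discrepancy from $1$ or $x$ lying in $C_j$. This is guaranteed because $G$ is non-cyclic, and it matters especially for $i=j$ or $C_j=C_0\ni 1$.
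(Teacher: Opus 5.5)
Your proposal is correct and is essentially the paper's own proof. For each ordered distinguished pair you bound $|B_{i\to j}|$ by $\beta(S)|S|/\mu$ by double counting the non-generating pairs in $C_i\times C_j$, using Corollary~\ref{coset-beta} and the non-cyclicity of $G$; you then take a union over at most $2(n+1)$ ordered pairs and apply Lemma~\ref{BetaBehaviorGen} with $M=1$ to get the limit.
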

\begin{proof}
    Let $\big[(C_i, C_j)\big]$ be a distinguished pair of cosets, for some $i, j\in\{0,1,\dots,n-1\}$. 
    
First, Corollary~\ref{coset-beta} gives an upper bound on the number of pairs in
$C_i\times C_j$ that do not generate $G$, namely
    $$
    \big|\{(a,b)\in C_i\times C_j : \langle a, b \rangle \neq G\}\big| \leq \beta(S)|S|^2 \, .
    $$
    
    Denote by $B_{i\to j}$ the set of vertices of $C_i$ that are bad towards $C_j$. Let $x\in B_{i\to j}$, so, by definition,
    $$
    d_{C_j}(x) \leq (1-\mu)|C_j| \, .
    $$
    \begin{sloppypar}
    Thus, as pointed out in Definition \ref{degree}, since $G$ is non-cyclic, we have the following estimate for the \mbox{non-generating} pairs in $\{x\}\times C_j$:
    $$
    \big| \{(x,a)\in\{x\}\times C_j : \langle x, a \rangle \neq G  \} \big| = |C_j| - d_{C_j}(x) \geq |C_j| - (1-\mu)|C_j| = \mu|C_j| = \mu|S| \, .
    $$       
    \end{sloppypar}
    
    Altogether, each bad vertex of $C_i$ towards $C_j$ contributes with at least $\mu |S|$ pairs of $C_i\times C_j$
    that do not generate $G$, whose total number is at most $\beta(S)|S|^2$.
    Therefore, 
    $$
    \big| B_{i\to j} \big| \cdot \mu |S| \leq \beta(S)|S|^2 \, ,
    $$
    so
    $$
    \big| B_{i\to j} \big| \leq \frac{\beta(S)}{\mu} |S| \, .
    $$
    
As a final step, since there are at most $n+1$ distinguished pairs of cosets (also when $n=2$), and each pair might contribute bad vertices in both directions, that is, vertices of either coset which are bad towards the other, we obtain
$$
|B| = \Big|\bigcup B_{i\to j}\Big| \leq \sum \big|B_{i\to j}\big| \leq  \frac{2(n+1)\beta(S)}{\mu}|S| \,  ,
$$
which is the bound of the statement.

\begin{sloppypar}
The last assertion is an immediate consequence of Lemma \ref{BetaBehaviorGen}, applied with~\mbox{$M=1$.}     
\end{sloppypar}
\end{proof}

\subsection{The bad-vertex path $P_B$}\label{BpathSubSec}

In this subsection, we shall identify, through an explicit construction, a path
in $\Gamma(G)$ that contains all the bad vertices (and some good vertices) and is balanced with respect
to the cosets, meaning that it contains the same number of vertices from each
coset. This path will be denoted by $P_B$.

\begin{lemma}\label{badpath}.
There exists a path $P_B$ in $\Gamma(G)$ satisfying all of the following conditions:
\begin{itemize}
    \item $P_B$ contains all the bad vertices (that is, all the elements of $B$);
    \item $P_B$ contains the same number $T$ of vertices from each of the cosets
    $C_0,C_1,\ldots,C_{n-1}$;
    \item the endvertices of $P_B$ are good vertices, one belonging to~$C_0$ and the other to~$C_{n-1}$.
\end{itemize}
\end{lemma}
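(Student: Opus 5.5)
The plan is to build $P_B$ as a concatenation of short segments, one per bad vertex. Each segment runs through the cosets in the cyclic order $C_0,C_1,\ldots,C_{n-1}$, possibly wrapping around, and covers a whole number of full rounds. Consecutive indices in this order always form distinguished pairs: $\big[(C_i,C_{i+1})\big]$ for $i\le n-2$, and $\big[(C_{n-1},C_0)\big]$. If every segment starts in $C_0$, ends in $C_{n-1}$, and consecutive segments are joined by an edge from the $C_{n-1}$-end of one to the $C_0$-start of the next, then the whole path visits the cosets in the periodic pattern $C_0,C_1,\ldots,C_{n-1},C_0,\ldots,C_{n-1}$. It therefore automatically contains the same number $T$ of vertices from each coset.

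Throughout the construction I use a counting reservoir. Enumerate $B=\{b_1,\ldots,b_m\}$. Each segment will use at most $2n+1$ vertices, so at any moment the set $U$ of used vertices satisfies $|U|\le (2n+1)(m+1)$. By Lemma~\ref{estimationBgen} and Lemma~\ref{BetaBehaviorGen} (with $M=2$),
$$|U|+|B|+1\le (2n+2)(m+1)=o(|S|).$$
For $|S|$ large this is smaller than $\tfrac12(\delta-\mu)|S|$. There are two sources of available neighbours:
\begin{itemize}
\item Any vertex $x$ (good or bad) in $C_i$ has at least $\delta|S|$ neighbours in $C_j$ for every distinguished pair $\big[(C_i,C_j)\big]$, by Corollary~\ref{FULMANcor}.
\item A good vertex has more than $(1-\mu)|S|$ such neighbours.
\end{itemize}
Hence we can always pick a neighbour of $x$ in $C_j$ that is good, unused and different from $1$. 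Moreover, a good vertex $y\in C_{n-1}$ and an arbitrary vertex $v\in C_1$ have more than $(\delta-\mu)|S|$ common neighbours in $C_0$, since both $\big[(C_{n-1},C_0)\big]$ and $\big[(C_0,C_1)\big]$ are distinguished. This is exactly the remark made after the definition of good vertices. So they also have a common neighbour that is good, unused and not $1$.

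The segment for $b=b_k\in C_i$ is built as follows.
\begin{itemize}
\item \emph{Forward walk.} Starting from $b$, choose successively good unused neighbours in $C_{i+1},C_{i+2},\ldots$ (indices mod $n$) until we reach a good vertex of $C_{n-1}$. If $i=n-1$, we go around once more, $C_0,\ldots,C_{n-1}$, so that the endpoint is good.
\item \emph{Backward walk.} Starting from $b$, choose good unused neighbours in $C_{i-1},C_{i-2},\ldots$ until we reach a vertex $v\in C_1$. If $i=0$, this wraps through $C_{n-1},\ldots,C_1$ first. If $i=1$, then $v=b$ and no backward steps are taken.
\item \emph{Closing the segment.} If $k=1$, choose a good unused neighbour of $v$ in $C_0$. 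If $k>1$, let $y$ be the good $C_{n-1}$-endpoint of the previous segment, and choose a good unused common neighbour of $y$ and $v$ in $C_0$. This joins the segments.
\end{itemize}
Some bad vertex $b_{k'}$ might already have been used as an intermediate vertex of an earlier segment. We never allow this, since only good vertices are chosen as intermediates. If $B=\emptyset$, take any good vertex of $C_0$ and walk forward one round through good vertices. All edges used lie in distinguished pairs, so every step is justified by the neighbour-count bounds above. The resulting path starts at a good vertex of $C_0$, ends at a good vertex of $C_{n-1}$, contains all of $B$, and is balanced. For $n=2$ the same scheme works verbatim, using only the pair $\big[(C_0,C_1)\big]$, while $\big[(C_1,C_1)\big]$ is not needed here.

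The main obstacle is the bookkeeping that keeps $U$ small uniformly in $n$: since $n$ can grow like $\log|S|$, the segments have length up to about $2n$. This is exactly why Lemma~\ref{BetaBehaviorGen} is stated with an arbitrary power $n^M$. I will also check the joining step carefully. It is the one place where a bad vertex ($v=b\in C_1$) must be linked to a good $C_{n-1}$-vertex through $C_0$, and there the strict inequality $\mu<\delta$ is essential.
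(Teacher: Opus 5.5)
Your proposal is correct and follows essentially the paper's argument. Both build a balanced path out of full rounds $C_0,\ldots,C_{n-1}$, take neighbours of bad vertices from Corollary~\ref{FULMANcor}, invoke $\mu<\delta$ only where a chosen vertex must be adjacent both to a good vertex and to a possibly bad one, and keep the number of used vertices at $O(n|B|+n)=o(|S|)$ via Lemma~\ref{BetaBehaviorGen} with $M=2$. The only difference is bookkeeping. The paper pre-places $b_k$ in the strip $Q_{2k}$ of a fixed template of $2|B|+1$ rounds and fills the gaps greedily from left to right. You instead grow good walks outward from each bad vertex and splice the segments at $C_0$, spending one or two rounds per bad vertex. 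Your $T$ therefore still satisfies $T\le 2|B|+1$, which is all Lemma~\ref{Tbehav} needs later.
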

\begin{proof}
Set
$$
\rho \coloneqq \delta - \mu > 0 \, ,
$$
and
$$
T \coloneqq 2|B|+1 \, .
$$

\begin{sloppypar}
The path $P_B$ that we shall identify will consist of $T$ consecutive strips, say $Q_1, Q_2, \ldots, Q_T$, each made up of $n$ vertices, one from each of the cosets $C_0,\ldots,C_{n-1}$, in that order. Thus, each edge of the path will either join two consecutive vertices within the same strip, or join the last vertex of one strip to the first vertex of the next strip. Accordingly, the intended structure for the path $P_B$ can be represented schematically as follows:
\end{sloppypar}

\vspace{0.3cm}
\newcommand{\hconnect}[2]{%
  \draw[-{Latex[length=2mm]}]
    ($(#1.east)!0.5!(#2.west)+(-7mm,0)$)
    --
    ($(#1.east)!0.5!(#2.west)+(7mm,0)$);
}

\newcommand{\rowconnect}[2]{%
  \draw[-{Latex[length=2mm]}]
    (#1.south)
    .. controls +(0,-11mm) and +(0,11mm) ..
    (#2.north);
}

\begin{center}
\hspace{-0.6cm}
\begin{adjustbox}{max width=0.95\linewidth}
\begin{tikzpicture}[
  every node/.style={inner sep=1pt}
]

\matrix (M) [
  matrix of math nodes,
  row sep=16mm,
  column sep=20mm
] {
  |[name=q1zero]| q_0(1)
  & |[name=q1one]| q_1(1)
  & |[name=q1dots,inner xsep=6mm]| \cdots
  & |[name=q1end]| q_{n-1}(1)
\\
  |[name=q2zero]| q_0(2)
  & |[name=q2one]| q_1(2)
  & |[name=q2dots,inner xsep=6mm]| \cdots
  & |[name=q2end]| q_{n-1}(2)
\\
  |[name=leftdots]| \vdots
  & \vdots
  & \ddots
  & |[name=rightdots]| \vdots
\\
  |[name=qTm1zero]| q_0(T-1)
  & |[name=qTm1one]| q_1(T-1)
  & |[name=qTm1dots,inner xsep=6mm]| \cdots
  & |[name=qTm1end]| q_{n-1}(T-1)
\\
  |[name=qTzero]| q_0(T)
  & |[name=qTone]| q_1(T)
  & |[name=qTdots,inner xsep=6mm]| \cdots
  & |[name=qTend]| q_{n-1}(T)
\\
};

\coordinate (labelcolumn) at ($(M.west)+(-4mm,0)$);

\node[
  anchor=base east,
  text width=20mm,
  align=right,
  inner sep=0pt
] at (labelcolumn |- q1zero.base)
  {$Q_1\,:$};

\node[
  anchor=base east,
  text width=20mm,
  align=right,
  inner sep=0pt
] at (labelcolumn |- q2zero.base)
  {$Q_2\,:$};

\node[
  anchor=base east,
  text width=15mm,
  align=center,
  inner sep=0pt
] at (labelcolumn |- leftdots.base)
  {$\vdots$};

\node[
  anchor=base east,
  text width=20mm,
  align=right,
  inner sep=0pt
] at (labelcolumn |- qTm1zero.base)
  {$Q_{T-1}\,:$};

\node[
  anchor=base east,
  text width=20mm,
  align=right,
  inner sep=0pt
] at (labelcolumn |- qTzero.base)
  {$Q_T\,:$};

\hconnect{q1zero}{q1one}
\hconnect{q1one}{q1dots}
\hconnect{q1dots}{q1end}

\hconnect{q2zero}{q2one}
\hconnect{q2one}{q2dots}
\hconnect{q2dots}{q2end}

\hconnect{qTm1zero}{qTm1one}
\hconnect{qTm1one}{qTm1dots}
\hconnect{qTm1dots}{qTm1end}

\hconnect{qTzero}{qTone}
\hconnect{qTone}{qTdots}
\hconnect{qTdots}{qTend}

\rowconnect{q1end}{q2zero}
\rowconnect{q2end}{leftdots}
\rowconnect{rightdots}{qTm1zero}
\rowconnect{qTm1end}{qTzero}

\end{tikzpicture}
\end{adjustbox}
\end{center}

\vspace{0.3cm}

\noindent At this stage, the symbols $q_i(k)$ are placeholders: $q_i(k)$ will denote the vertex chosen from $C_i$ to occupy the $i$-th position of the $k$-th strip. Formally, constructing $P_B$ amounts to specifying these vertices. Now we do so, thereby filling the strips with all the bad vertices and some good vertices, while ensuring that the resulting sequence satisfies the requirements of the statement.
We first place the bad vertices, and then look for suitable good vertices to fill the remaining positions. The difficulty is that, with the probability estimates at our disposal, it is not always possible to find a good vertex adjacent to two distinct bad vertices. For this reason, we shall not place bad vertices too close to each other.
More precisely, we shall place exactly one bad vertex in each even-indexed strip, while odd-indexed strips will contain only good vertices. Since each strip contains $n\geq 2$ vertices, this arrangement guarantees that any two consecutive bad vertices along the path will be separated by at least two positions reserved for good vertices.

Fix an enumeration:
$$
B = \{b_1, b_2, \dots, b_{|B|}\} \, . 
$$
For each $k\in\{1,\ldots,|B|\}$, let $i(k)$ be the unique index such that $b_k\in C_{i(k)}$, and place $b_k$ in the $i(k)$-th position of the strip $Q_{2k}$; more formally, set $q_{i(k)}(2k) \coloneqq b_k$. 

In all the remaining positions, across all the strips, we have to place distinct good vertices in such a way that any two vertices occupying consecutive positions are joined by an edge of~$\Gamma(G)$: they may both be good vertices, or one may be a good vertex and the other may be one of the bad vertices already placed.
We begin by placing an arbitrarily chosen good vertex of~$C_0\setminus\{1\}$ in the first position (the $0$-th position) of the first strip~$Q_1$: for~$|S|$ large enough, Lemma \ref{estimationBgen} yields
$$|C_0\setminus\{1\}| - |B| = |S|-1-|B| > 0 \, ,$$
so such a vertex exists.
We then proceed by placing further good vertices, in order from left to right, in the subsequent empty positions encountered along the path, as described in the following. Suppose that we have to fill the $i$-th position of the $k$-th strip.
Thus, we have to choose a suitable vertex in $C_i$. The first requirement is that this vertex is adjacent to the vertex immediately preceding it and also to the vertex immediately following it (if already placed). Denote by $U_{i,k}$ the set of vertices of $C_i$ satisfying these adjacency conditions, ignoring for the moment the additional requirements that the chosen vertex must be good and distinct from all the vertices already placed. Three situations may occur, depending on where the position to be filled lies. We shall prove separately that, in each case, we have
$$
|U_{i,k}| \geq \rho|S| \, .
$$

\noindent For the arguments that follow, a key observation is that the structure prescribed for $P_B$ ensures that any two consecutive vertices along the path belong to cosets that form a distinguished pair.

\begin{enumerate}[label=(\Roman*)]
    \item {\bf The position to be filled follows a good vertex~$x$ and precedes another empty position.}\\
    Since $x$ is a good vertex, by definition
    $$
    d_{C_i}(x) > (1-\mu)|C_i| \, ,
    $$
    that is, there are strictly more than $(1-\mu)|C_i|$ vertices adjacent to $x$ in $C_i$. Thus, since~$\delta \leq 1$,
    $$
    |U_{i,k}| = d_{C_i}(x) > (1-\mu)|C_i| = (1-\delta+\rho)|S| \geq \rho|S| \, . 
    $$
    \item {\bf The position to be filled follows a bad vertex~$y$.}\\
    Note that, in this case, the prescribed spacing of the bad vertices ensures that the position to be filled precedes another empty position. By Corollary \ref{FULMANcor}, we have
    $$
    |U_{i, k}| = d_{C_i}(y) \geq \delta|S| > (\delta - \mu)|S| = \rho|S| \, . 
    $$

    \item {\bf The position to be filled follows a good vertex~$x$ and precedes a bad vertex~$y$.}\\
    This is the most delicate situation. By Corollary \ref{FULMANcor}, we have
    $$
    d_{C_i}(y) \geq \delta|S| \, ,
    $$
    that is, there are at least $\delta|S|$ vertices adjacent to $y$ in $C_i$. Moreover, since $x$ is a good vertex, as we have $d_{C_i}(x) > (1-\mu)|C_i|$, it follows that the number of vertices non-adjacent to $x$ and belonging to $C_i$ is at most
    $$
    |C_i|-d_{C_i}(x) < |C_i|-(1-\mu)|C_i|= \mu|C_i|=\mu|S| \, . 
    $$ 
    In particular, among the at least~$\delta|S|$ neighbours of~$y$ in~$C_i$, there are fewer than~$\mu|S|$ vertices that are not adjacent to~$x$. Therefore, imposing both adjacency requirements, we obtain:
    $$
    |U_{i,k}| \geq \delta|S| - \mu|S| = \rho|S| \, .
    $$
    
\end{enumerate}

\begin{remark}
    Actually, case (I) could be treated in the same way as case (II), by directly applying Corollary \ref{FULMANcor}. However, in preparation for case (III), it is useful, from an expository point of view, to treat it separately in the way described above.
\end{remark}
Now, we take into account the additional requirements that the vertex to be chosen to fill the position must be good and distinct from all the vertices already placed. Denote by $V_{i,k}$ the union of the set of good vertices already placed before the $i$-th position of the $k$-th strip and the set $B$ of all bad vertices. Clearly, since the vertices already placed are less than the total number $nT$ of vertices of the complete path $P_B$, we have
$$
|V_{i,k}| \leq nT+|B| \, .
$$
By an application of Lemma \ref{estimationBgen}, Lemma \ref{BetaBehaviorGen} with $M=1$ and $M=2$, and Lemma \ref{ColvaLemma}, we obtain
$$
\frac{nT + |B|}{|S|} = \frac{(2n+1)|B|}{|S|} + \frac{n}{|S|} \leq \frac{(4n^2+6n+2)}{\mu}\beta(S) + \frac{\log_2|S|}{|S|} \quad \xrightarrow[|S| \to +\infty ]{} \quad 0 \, ,
$$
hence,
$$
\lim_{|S|\to +\infty} \frac{nT+|B|}{|S|} = 0 \, .
$$
Thus, we can take $|S|$ sufficiently large to have
$$
|V_{i,k}| \leq nT + |B| < \rho|S| \, .
$$
On the other hand, we proved $|U_{i,k}| \geq \rho|S|$, so, by cardinality, we obtain
$$
U_{i,k} \setminus V_{i,k} \neq \emptyset \, .
$$
Therefore, any element of $U_{i,k}\setminus V_{i,k}$ is a valid choice for filling the $i$-th position of the $k$-th strip: it satisfies the required adjacency conditions in $\Gamma(G)$, is a good vertex, and is distinct from all the vertices already placed.

Therefore, the construction of the path $P_B$ can be completed in such a way that all the requirements of the statement are satisfied.
\end{proof}

\begin{sloppypar}
\noindent From now on, we fix a path $P_B$ as described in Lemma~\ref{badpath}.

For each \mbox{$i\in\{0,1,\ldots, n-1\}$,} let $D_i$ be the set of the $T$ vertices of $C_i$ involved in the path $P_B$; more precisely, in accordance with the previous notation,
$$
D_i \coloneqq \{ q_i(1), q_i(2), \ldots, q_i(T) \} \, .
$$
We are naturally led to consider the remaining vertices of each $C_i$, meaning those vertices of $C_i$ that are not involved in the path $P_B$, so we define the sets
$$\begin{array}{l}
E_0 \coloneqq \big(C_0\setminus\{1\}\big) \setminus D_0 \, ,\\[6pt]
E_i \coloneqq C_i \setminus D_i \, , \, \mbox{ for each } i \in \{1,2,\ldots,n-1\} \, .
\end{array}
$$
\end{sloppypar}
\noindent Moreover, we set
$$
m \coloneqq |S| - T \,  ,
$$
so
$$
\begin{array}{l}
|E_0| = m - 1 \, ,\\[6pt]
|E_i| = m \, , \, \mbox{ for each } i \in \{1,2,\ldots,n-1\} \, .
\end{array}
$$

\subsection{The good-vertex path $P_K$}
In this subsection we shall show how to identify a path $P_K$ in $\Gamma(G)$ that passes through all the vertices of the~$E_i$'s. In order to do so, we shall make use of the well-known ``marriage theorem'' of Philip Hall, recalled in the preliminaries. Note that now we are in a situation where we have to deal only with good vertices, so the adjacency between any two $E_i$'s associated to a distinguished pair is uniformly dense, meaning that every vertex of either set has many neighbours in the other set (as the order of the group grows).

\begin{lemma}\label{Tbehav}
    We have
    $$
    \lim_{|S|\to +\infty} \frac{T}{|S|} = 0 \, .
    $$
    As a consequence,
    $$
    \lim_{|S|\to +\infty} m = +\infty \, .
    $$
\end{lemma}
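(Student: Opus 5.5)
The plan is to bound $T/|S|$ directly through the definition $T = 2|B|+1$ and Lemma~\ref{estimationBgen}. Since $T = 2|B|+1$, we can write
$$
\frac{T}{|S|} = 2\,\frac{|B|}{|S|} + \frac{1}{|S|} \, .
$$
By Lemma~\ref{estimationBgen}, the first term is at most $\frac{4(n+1)\beta(S)}{\mu}$. Writing $n+1\leq 2n$ (as $n\geq 1$), this is at most $\frac{8}{\mu}\,n\beta(S)$. Lemma~\ref{BetaBehaviorGen} with $M=1$ shows that $n\beta(S)\to 0$ as $|S|\to+\infty$. Since $\mu$ is a fixed positive constant, the first term tends to $0$. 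Alternatively, one can simply invoke the last assertion of Lemma~\ref{estimationBgen}, namely $|B|/|S|\to 0$. The second term $1/|S|$ trivially tends to $0$. Together these give $\lim T/|S| = 0$.

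For the consequence, I would write
$$
m = |S| - T = |S|\Bigl(1 - \frac{T}{|S|}\Bigr) \, .
$$
By the first part, there is a threshold $s_0$ with $T/|S| < 1/2$ whenever $|S|\geq s_0$; for such groups $m > |S|/2$. Since $|S|\to+\infty$ in the sense of the asymptotic conventions, given any $R>0$ we take $|S|\geq \max\{s_0, 2R\}$ to get $m>R$. This threshold is uniform over the family, as the conventions require.

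There is no real obstacle here. The only point needing care is that every bound uses constants independent of $G$: $\mu$ is absolute, and the dependence on $n$ is absorbed by Lemma~\ref{BetaBehaviorGen}. This keeps the threshold uniform across the whole family, as required by the asymptotic conventions.
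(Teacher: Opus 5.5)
Your proof is correct and takes essentially the same route as the paper. Like the paper, you split $T/|S| = 2|B|/|S| + 1/|S|$, use $|B|/|S|\to 0$ (which rests on Lemma~\ref{BetaBehaviorGen} via the bound of Lemma~\ref{estimationBgen}), and then write $m=|S|(1-T/|S|)$; your explicit uniform threshold $|S|\geq\max\{s_0,2R\}$ simply spells out the paper's final limit computation in more detail.
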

\begin{proof}
    Since $T=2|B|+1$, Lemma \ref{BetaBehaviorGen} yields
     $$
     \frac{T}{|S|} = \frac{2|B|+1}{|S|} = 2\cdot\frac{|B|}{|S|} + \frac{1}{|S|}  \quad \xrightarrow[|S| \to +\infty ]{} \quad 0 \, .
     $$
     Consequently,
     $$
     \lim_{|S|\to +\infty} m = \lim_{|S|\to +\infty} \big[ |S| - T \big] =
     \lim_{|S|\to +\infty} \left[ |S|\left(
     1-\frac{T}{|S|} \right) \right] = +\infty \, .
     $$
\end{proof}

\begin{lemma}\label{GoodVertexVar}
Let $\big[(C_i,C_j)\big]$ be a distinguished pair of cosets, and let $x\in C_i\setminus\{1\}$ be a good vertex (in particular, this holds if $x\in E_i$). Then, for~$|S|$ large enough, in~$\Gamma(G)$ we~have
$$
d_{E_j}(x)>(1-2\mu)|E_j| \, .
$$
Equivalently, the number of vertices of $E_j$ that are not adjacent to $x$ is strictly less than~$2\mu|E_j|$.
\end{lemma}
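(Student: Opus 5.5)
The plan is to pass from the degree bound in the whole coset $C_j$, which holds by definition of good vertex, to a degree bound in the subset $E_j\subseteq C_j$. The loss coming from the removed vertices $D_j$ (and possibly the identity when $j=0$) is negligible by Lemma~\ref{Tbehav}.

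First, since $x$ is good and $\big[(C_i,C_j)\big]$ is distinguished, the definition gives $d_{C_j}(x)>(1-\mu)|S|$. Equivalently, the number of vertices of $C_j$ not adjacent to $x$ is less than $\mu|S|$; here we use that $G$ is non-cyclic, as noted in Definition~\ref{degree}, so $|C_j|-d_{C_j}(x)$ counts exactly the non-generating pairs. Since $E_j\subseteq C_j$, the number of vertices of $E_j$ not adjacent to $x$ is at most the number of elements of $C_j$ not adjacent to $x$. Hence it is less than $\mu|S|$, and we get
$$|E_j|-d_{E_j}(x)<\mu|S|.$$
When $j=0$ the element $1$ is not in $E_0$, so it causes no trouble.

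Second, we compare $\mu|S|$ with $2\mu|E_j|$. We have $|E_j|\geq m-1=|S|-T-1$. By Lemma~\ref{Tbehav}, $(T+1)/|S|\to 0$, so for $|S|$ large enough $T+1\leq |S|/2$. Then $|E_j|\geq |S|/2$, and therefore $\mu|S|\leq 2\mu|E_j|$.

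Combining the two steps gives $|E_j|-d_{E_j}(x)<2\mu|E_j|$, that is, $d_{E_j}(x)>(1-2\mu)|E_j|$. The threshold depends only on the uniform limit in Lemma~\ref{Tbehav}, so it is uniform over the family.

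There is no real obstacle. The only points needing care are that the threshold for $|S|$ is uniform, and the non-cyclicity remark that identifies non-neighbours with non-generating pairs. The parenthetical claim that elements of $E_i$ are good holds because $P_B$ contains all bad vertices, so $E_i\subseteq K$.
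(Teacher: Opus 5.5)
Your proof is correct and follows essentially the same route as the paper: start from the good-vertex bound $d_{C_j}(x)>(1-\mu)|S|$ and use Lemma~\ref{Tbehav} to make $|E_j|$ comparable to $|S|$. The only difference is bookkeeping. You count non-neighbours, which can only decrease on restricting to $E_j\subseteq C_j$, and then use $|E_j|\geq m-1\geq |S|/2$. The paper instead subtracts the at most $T$ removed neighbours to get $d_{E_j}(x)>m-\mu|S|$, and then concludes using $T/|S|<1/2$ together with $|E_j|\leq m$.
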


\begin{proof}
Let $x\in C_i\setminus\{1\}$. Since $x$ is a good vertex, we have
$$
d_{C_j}(x)>(1-\mu)|C_j|=(1-\mu)|S| \, .
$$

In passing from $C_j$ to $E_j$, at most $T$ neighbours of $x$ are removed. Indeed, if $j\neq 0$, exactly $T$ vertices are removed from $C_j$. If $j=0$, the identity element is also removed, but it is not a vertex of $\Gamma(G)$, and hence it does not contribute to the loss of degree. Therefore,
$$
d_{E_j}(x)>(1-\mu)|S|-T \, .
$$
Since $m=|S|-T$, it follows that
$$
d_{E_j}(x)>m-\mu|S| \, .
$$
\begin{sloppypar}
Consequently, we have
$$
d_{E_j}(x)>(1-2\mu)m
$$
provided that $m-\mu|S|>(1-2\mu)m$. The latter inequality is equivalent to \mbox{$|S|<2m=2(|S|-T)$,} and, in turn, to $T/|S|<1/2$, which, for $|S|$ large enough, is true by Lemma~\ref{Tbehav}. Hence,
$$
d_{E_j}(x)>(1-2\mu)m \geq (1-2\mu)|E_j| \, ,
$$
and the main statement follows. Clearly, the stated equivalence follows from
$$|E_j|-d_{E_j}(x) < |E_j| - (1-2\mu)|E_j| = 2\mu |E_j| \, .$$
\end{sloppypar}
\end{proof}

\begin{lemma}\label{goodpath}
Let $u\in E_1$ and $v\in E_{n-1}$ be chosen arbitrarily, with $u\neq v$. There exists a path~$P_K$ in~$\Gamma(G)$ satisfying all of the following conditions:
\begin{itemize}
    \item $P_K$ contains all the vertices of the sets $E_0, E_1, \ldots, E_{n-1}$;
    \item the path $P_K$ starts at $u$ and ends at $v$.
\end{itemize}
\end{lemma}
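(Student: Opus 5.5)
The plan is to build $P_K$ from perfect matchings between consecutive sets $E_i$, obtained from Corollary~\ref{Hallcor}, and then to join the resulting row-paths into a single path by a switching argument. (For $n=1$ the statement degenerates, since $E_1=E_{n-1}=E_0$; there the same scheme runs with a single self-loop pair, so I describe $n\geq 2$.) The path traverses the cosets in the cyclic order
$$E_1\to E_2\to\cdots\to E_{n-1}\to E_0\to E_1\to\cdots,$$
and every two consecutive sets form a distinguished pair. Taking $m-1$ full strips $(E_1,\dots,E_{n-1},E_0)$ followed by one final partial strip $(E_1,\dots,E_{n-1})$ uses exactly $m$ vertices of each $E_i$ with $i\ge 1$ and $m-1$ vertices of $E_0$. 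The path then starts in $E_1$ and ends in $E_{n-1}$, as required. For $n=2$ this is just the alternation $1,0,1,\dots,0,1$.

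\textbf{Step 1 (rows).} For $i=1,\dots,n-2$, consider the bipartite graph between $E_i$ and $E_{i+1}$ induced by $\Gamma(G)$. By Lemma~\ref{GoodVertexVar}, every vertex has degree $>(1-2\mu)m\ge m/2$, since $\mu<1/4$. Corollary~\ref{Hallcor} therefore gives a perfect matching $M_i$. Next, match $E_{n-1}\setminus\{v\}$ with $E_0$; both have size $m-1$. Removing $v$ costs each vertex at most one neighbour, so the degrees are still $>(1-2\mu)m-1\ge (m-1)/2$ once $m$ is large, which holds by Lemma~\ref{Tbehav}. This yields a matching $M_{n-1}$. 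Juxtaposing $M_1,\dots,M_{n-1}$ gives $m$ vertex-disjoint row-paths, each starting in $E_1$. Exactly one row ends at $v$, and the other $m-1$ rows end in $E_0$.

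I also want the row through $u$ to differ from the row through $v$. For $n=2$ this is automatic because $u\neq v$. For $n\ge3$, if the two rows coincide, I exchange the $M_1$-partners of $u$ and of some other $u'\in E_1$. Lemma~\ref{GoodVertexVar} leaves fewer than $4\mu m<m$ forbidden choices of $u'$, so a suitable $u'$ exists.

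\textbf{Step 2 (linking).} Let $A\subseteq E_0$ be the $m-1$ row-ends, and let $A'\subseteq E_1$ be the $m-1$ row-starts other than $u$. The pair $[(C_0,C_1)]$ is distinguished, so the same degree argument (losing at most one neighbour for the excluded $u$) and Corollary~\ref{Hallcor} give a perfect matching $N$ between $A$ and $A'$. Rows together with $N$-links form a $2$-regular structure apart from the two ends. Concretely, they decompose into one path $u\to\cdots\to v$ plus possibly several disjoint cycles, each cycle a concatenation of rows closed by links.

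\textbf{Step 3 (merging cycles), the main obstacle.} A perfect matching gives no control over the cycle structure, so the linking permutation must be corrected. The path contains at least one link, because its first row, through $u$, ends in $E_0$ by the choice made in Step~1. Take a link $a\to b$ on the path and a cycle containing some link $a'\to b'$, with $a,a'\in A$ and $b,b'\in A'$. Replacing these by $a\to b'$ and $a'\to b$ splices the cycle into the path while keeping the endpoints $u,v$, and it reduces the number of cycles by one. The swap needs the edges $\{a,b'\}$ and $\{a',b\}$. I fix the link $a\to b$ on the path and let $a'\to b'$ range over the links of the cycle. If the cycle has many links, then by Lemma~\ref{GoodVertexVar} fewer than $2\mu m$ of its $b'$ are non-adjacent to $a$, and fewer than $2\mu m$ of its $a'$ are non-adjacent to $b$; hence a valid swap exists as long as the cycle has more than $4\mu m$ links.

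For short cycles I would instead vary the path link $a\to b$ among the path's many links, using the same counting from the side of the cycle's fixed link. Once the path carries more than $4\mu m$ links this always succeeds. If both the path and the cycle are short, I first merge short cycles among themselves by the identical swap, which joins two cycles into one. A careful bookkeeping of this counting, ensuring that at every stage some pair of links has the two required adjacencies, is the delicate point of the proof. Iterating until no cycles remain yields the desired $P_K$.
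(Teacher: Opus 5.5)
Your Steps 1 and 2 are sound; separating the $u$-row from the $v$-row plays the role of the paper's endpoint switch. The gap is Step 3: it is where the real work lies, you only sketch it, and with the bounds you use it does not close.

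\textbf{Where the counting fails.} By Lemma~\ref{GoodVertexVar}, each of the two adjacency conditions of a swap fails for fewer than $2\mu m$ partner links. So a swap is guaranteed only when more than about $4\mu m$ partner links are available. Your case split misses the configuration in which the path and every cycle have at most $4\mu m$ links. Merging two short cycles ``by the identical swap'' again needs one of them to have more than $4\mu m$ links. If there is only one short cycle besides a short path, there is nothing to merge it with. Since $\mu$ is only assumed to be below $1/4$, a total of $m-1\le 8\mu m$ links is not excluded by your estimates. Even letting the partner range over all links outside a fixed short cycle $C$ requires $m-1-4\mu m>4\mu m$, i.e.\ essentially $\mu<1/8$.

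\textbf{How to repair it.} Use the definition of good vertex directly instead of Lemma~\ref{GoodVertexVar}. A good vertex of $C_0$ has fewer than $\mu|S|$ non-neighbours in $C_1$, and vice versa, so a swap fails for fewer than $2\mu|S|$ partner links. Now take any cycle $C$. If $C$ has at least $2\mu|S|$ links, fix a path link (one exists by your Step 1) and pair it with a suitable link of $C$. Otherwise $C$ has fewer than $2\mu|S|$ links. Fix a link of $C$ and let the partner range over the $m-1-|C|>m-1-2\mu|S|\ge 2\mu|S|$ links outside $C$; this holds for $|S|$ large, since $4\mu<1$ and $m/|S|\to 1$. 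In both cases the number of cycles drops by one, and induction finishes.

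\textbf{The paper's route.} The paper avoids the whole issue by deciding the order of the strips before choosing any vertex of $E_0$. It builds only the strips $e_1(t),\dots,e_{n-1}(t)$ by successive perfect matchings. It performs one swap so that $v$ is not in $u$'s strip, then orders the strips with $u$'s first and $v$'s last. Next it applies Corollary~\ref{Hallcor} to the bipartite graph between the $m-1$ gaps $t\in\{1,\dots,m-1\}$ and $E_0$. There $t$ is joined to $x$ when $x$ is adjacent both to the last vertex of $R_t$ and to the first vertex of $R_{t+1}$. Each condition fails for fewer than $\mu|S|$ choices, so every degree exceeds $(m-1)/2$. Each $E_0$ vertex is matched to a gap, carrying both adjacency conditions at once. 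It is never matched first to a row-end and then, by a second matching, to a row-start. Hence no permutation of the rows is ever created, and there is no cycle structure to repair.
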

    
\begin{proof}
We shall describe how to explicitly construct a path $P_K$ in $\Gamma(G)$ satisfying all the conditions of the statement. At the end of our construction, the path $P_K$ will be formed by a sequence of vertices taken one by one, cyclically, from $E_1, E_2, \ldots, E_{n-1}, E_0$, where the first vertex is $u$ and the last vertex is $v$. The intended structure for the path $P_K$ can be represented by the following scheme:

\newcommand{\hconnect}[2]{%
  \draw[-{Latex[length=2mm]}]
    ($(#1.east)!0.5!(#2.west)+(-7mm,0)$)
    --
    ($(#1.east)!0.5!(#2.west)+(7mm,0)$);
}

\newcommand{\splitbridge}[4]{%
  \coordinate (mid-#3) at ($(#1.south)!0.5!(#2.north)$);

  \node[
    anchor=center,
    inner sep=1pt
  ] (#3) at (mid-#3) {$#4$};

  \draw[-{Latex[length=2mm]}]
    (#1.south)
    .. controls +(0,-7mm) and +(9mm,0) ..
    (#3.east);

  \draw[-{Latex[length=2mm]}]
    (#3.west)
    .. controls +(-9mm,0) and +(0,7mm) ..
    (#2.north);
}

\begin{center}
\hspace{-0.8cm}
\begin{adjustbox}{max width=0.92\linewidth}
\begin{tikzpicture}[
  every node/.style={inner sep=1pt}
]

\matrix (M) [
  matrix of math nodes,
  row sep=16mm,
  column sep=20mm
] {
  |[name=r1e1]| u=e_1(1)
  & |[name=r1e2]| e_2(1)
  & |[name=r1dots,inner xsep=6mm]| \cdots
  & |[name=r1end]| e_{n-1}(1)
\\
  |[name=r2start]| e_1(2)
  & |[name=r2e2]| e_2(2)
  & |[name=r2dots,inner xsep=6mm]| \cdots
  & |[name=r2end]| e_{n-1}(2)
\\
  |[name=leftdots]| \vdots
  & \vdots
  & \ddots
  & |[name=rightdots]| \vdots
\\
  |[name=rm1start]| e_1(m-1)
  & |[name=rm1e2]| e_2(m-1)
  & |[name=rm1dots,inner xsep=6mm]| \cdots
  & |[name=rm1end]| e_{n-1}(m-1)
\\
  |[name=rmstart]| e_1(m)
  & |[name=rme2]| e_2(m)
  & |[name=rmdots,inner xsep=6mm]| \cdots
  & |[name=rmend]| e_{n-1}(m)=v
\\
};

\coordinate (labelcolumn) at ($(M.west)+(-4mm,0)$);

\node[
  anchor=base east,
  text width=18mm,
  align=right,
  inner sep=0pt
] at (labelcolumn |- r1e1.base)
  {$R_1\,:$};

\node[
  anchor=base east,
  text width=18mm,
  align=right,
  inner sep=0pt
] at (labelcolumn |- r2start.base)
  {$R_2\,:$};

\node[
  anchor=base east,
  text width=13mm,
  align=center,
  inner sep=0pt
] at (labelcolumn |- leftdots.base)
  {$\vdots$};

\node[
  anchor=base east,
  text width=18mm,
  align=right,
  inner sep=0pt
] at (labelcolumn |- rm1start.base)
  {$R_{m-1}\,:$};

\node[
  anchor=base east,
  text width=18mm,
  align=right,
  inner sep=0pt
] at (labelcolumn |- rmstart.base)
  {$R_m\,:$};

\hconnect{r1e1}{r1e2}
\hconnect{r1e2}{r1dots}
\hconnect{r1dots}{r1end}

\hconnect{r2start}{r2e2}
\hconnect{r2e2}{r2dots}
\hconnect{r2dots}{r2end}

\hconnect{rm1start}{rm1e2}
\hconnect{rm1e2}{rm1dots}
\hconnect{rm1dots}{rm1end}

\hconnect{rmstart}{rme2}
\hconnect{rme2}{rmdots}
\hconnect{rmdots}{rmend}

\splitbridge{r1end}{r2start}{b1}{e_0(1)}
\splitbridge{r2end}{leftdots}{b2}{e_0(2)}
\splitbridge{rightdots}{rm1start}{b3}{e_0(m-2)}
\splitbridge{rm1end}{rmstart}{b4}{e_0(m-1)}

\end{tikzpicture}
\end{adjustbox}
\end{center}
\vspace{0.3cm}

\noindent At this stage, the symbols $e_i(t)$ are placeholders for the vertices that will fill the positions along the path.
As we can see, the path will be formed by $m$ horizontal strips $R_1,R_2,\ldots,R_m$ of vertices, bridged by some reserved vertices.
More precisely, for each $t\in\{1,2,\ldots,m\}$, the strip $R_t$ consists of the $n-1$ vertices
$e_1(t),e_2(t),\ldots,e_{n-1}(t)$: in this order, one from~$E_1$, one from~$E_2$, and so on, up to one from~$E_{n-1}$; clearly, any two consecutive vertices within a strip must be adjacent in $\Gamma(G)$. Finally, for each $t\in\{1,\ldots,m-1\}$, the pair of consecutive strips $(R_t,R_{t+1})$ is bridged by a vertex~$e_0(t)$ of~$E_0$, meaning that~$e_0(t)$ is adjacent in $\Gamma(G)$ to the last vertex of $R_t$ and to the first vertex of $R_{t+1}$. This interpretation essentially follows the same pattern that we shall use to construct the required path.\\

\noindent {\bf Step 1: Construction of the strips $R_t$.}\\
First, assign $e_1(1)\coloneqq u$, and then set $e_1(2),\ldots,e_1(m)$ to be the elements of $D_1\setminus\{u\}$, taken in an arbitrary order.
Now, fix arbitrarily an $i\in\{1,2,\ldots,n-2\}$, and suppose that the elements $e_i(t)$ have already been determined for all $t\in\{1,2,\ldots,m\}$. We will describe how to determine the elements $e_{i+1}(t)$ for all $t\in\{1,2,\ldots,m\}$.

Consider the bipartite subgraph $\Delta$ of $\Gamma(G)$ induced between $E_i$ and $E_{i+1}$ (that is, the graph with vertex set $E_i\cup E_{i+1}$ and edge set consisting of the edges of $\Gamma(G)$ that have an endpoint in $E_i$ and the other in $E_{i+1}$).
Let $x\in E_i$. By Lemma \ref{GoodVertexVar}, we have
$$
d_{E_{i+1}}(x) > (1-2\mu)|E_{i+1}| = (1-2\mu)m > \frac{m}{2} \, ,
$$
where the last inequality follows from $\mu < 1/4$.
Therefore, every vertex of $E_i$ has degree at least $m/2$ in $\Delta$. By a completely symmetric argument, the same holds for every vertex of $E_{i+1}$. Hence, by Corollary~\ref{Hallcor}, there exists a perfect matching in $\Delta$. 
Finally, for each $t\in\{1,\ldots,m\}$, we define $e_{i+1}(t)$ to be the unique vertex adjacent to $e_i(t)$ in this perfect matching.

\begin{sloppypar}
We first carry out this construction for $i=1$, and then repeat it inductively for \mbox{$i=2,\ldots,n-2$.} In this way, all the elements $e_i(t)$, with $i\neq 0$, are defined, and the strips $R_1,R_2,\ldots,R_m$ are determined. Moreover, for every $i\neq 0$ and $t$ in their respective ranges, $e_i(t)$ and $e_{i+1}(t)$ are adjacent in $\Gamma(G)$, as it has to be.
\end{sloppypar}

\mbox{ }\\
{\bf Step 2: Possible switch of the endpoints.}\\
According to our construction, $u$ is the first vertex of the first strip $R_1$. Moreover, for our purposes, we would like the last strip to end at $v$. Up to a suitable permutation of the strips and a corresponding reindexing of all the notation, this can always be achieved, unless $v$ is the last vertex of $R_1$, since in that case the position of $u$ would also change, which we do not want. This step describes how to overcome this obstacle.

Suppose that $v$ is the last element of the first strip $R_1$, that is, $e_{n-1}(1) = v$. For notational convenience, set $x\coloneqq e_{n-2}(1)$, which is the penultimate element of the first strip. 
In order to swap $v$ with the last vertex of another strip, we must choose that strip so that its last vertex (which belongs to $E_{n-1}$) is adjacent to $x$ and its penultimate vertex (which belongs to $E_{n-2}$) is adjacent to $v$. Now, by Lemma~\ref{GoodVertexVar}, there are less than~\mbox{$2\mu|E_{n-1}| = 2\mu m$} vertices of~$E_{n-1}$ that are not adjacent to $x$, and less than~\mbox{$2\mu|E_{n-2}|= 2\mu m$} vertices of~$E_{n-2}$ that are not adjacent to $v$. Thus, less than~$4\mu m$ strips are unsuitable for the swap. Therefore, since there are $m-1$ strips to choose from (after excluding the first strip), an admissible choice is guaranteed to exist if
$$
4\mu m<m-1 \, .
$$
As $\mu<1/4$, the latter inequality is equivalent to
$$
m>\frac{1}{1-4\mu} \, ,
$$
which holds for $|S|$ large enough, by Lemma~\ref{Tbehav}.

In conclusion, if necessary, we may swap $v$ with the last vertex of a suitable other strip. Moreover, as already observed, after reindexing all the notation accordingly, we may assume that $v$ is the last element of the last strip (namely, $e_{n-1}(m)$), while $u$ remains the first element of the first strip (namely, $e_1(1)$).

\mbox{ }\\
{\bf Step 3: Connection of the strips.}\\
In this step, we show how to connect the strips $R_t$ using the $m-1$ elements of $E_0$. For each $t\in I_{m-1}\coloneqq\{1,2,\ldots,m-1\}$, we need to choose an element $e_0(t)\in E_0$ that is adjacent in~$\Gamma(G)$ both to the last vertex of~$R_t$, namely~$e_{n-1}(t)$, and to the first vertex of~$R_{t+1}$, namely~$e_1(t+1)$, in such a way that $t\mapsto e_0(t)$ defines a bijection between $I_{m-1}$ and $E_0$. Denote by $L_t$ the set of vertices of $E_0$ satisfying these two adjacency conditions.
Now, consider the auxiliary graph $\Delta$ with vertex set $I_{m-1}\cup E_0$, in which $t\in I_{m-1}$ is adjacent to $x\in E_0$ if and only if $x\in L_t$ (that is,  $x$ is an admissible choice for $e_0(t)$).  Clearly, $\Delta$ is a bipartite graph with bipartition $(I_{m-1},E_0)$ such that $|I_{m-1}|=|E_0|=m-1$.  We shall use Hall's marriage theorem to show that~$\Delta$ has a perfect matching; more precisely, we shall prove that every vertex of~$\Delta$ has degree at least~$(m-1)/2$, in order to apply Corollary~\ref{Hallcor}.

Let~$t\in I_{m-1}$. Recall that $\big[(C_{n-1},C_0)\big]$ and $\big[(C_0,C_1)\big]$ are distinguished pairs of cosets. Since $e_{n-1}(t)$ is a good vertex, there are strictly less than $\mu|C_0|=\mu|S|$ vertices of $C_0$ that are not adjacent to $e_{n-1}(t)$ in $\Gamma(G)$. As $E_0\subseteq C_0$, the same bound holds for the number of vertices of $E_0$ that are not adjacent to $e_{n-1}(t)$ in $\Gamma(G)$. Analogously, there are strictly less than $\mu|C_0|=\mu|S|$ vertices of $E_0$ that are not adjacent to $e_1(t+1)$ in $\Gamma(G)$.
Therefore,
$$
d_\Delta (t) = |L_t| > |E_0| - 2\mu|S| = m-1 -2\mu|S| \, .
$$
Thus, we obtain
$$
d_\Delta (t) > \frac{m-1}{2} \, ,
$$
provided that $m-1-2\mu|S|>(m-1)/2$, which is true for $|S|$ large enough, by Lemma~\ref{Tbehav}, as $m=|S|-T$ and $\mu<1/4$.

On the other side of the bipartition, let~$x\in E_0$, and let us estimate how many pairs of consecutive strips~$x$ can bridge, that is, equivalently, how many vertices $t\in I_{m-1}$ are adjacent to~$x$ in~$\Delta$. Since~$x$ is a good vertex, using the same argument as above twice, we find that there are strictly less than~$\mu|S|$ vertices of~$E_{n-1}$ that are not adjacent to~$x$ in~$\Gamma(G)$, and that there are strictly less than $\mu|S|$ vertices of $E_1$ not adjacent to~$x$ in~$\Gamma(G)$. Thus, among the $m-1$ pairs of consecutive strips, there are strictly less than~$2\mu|S|$ pairs that~$x$ cannot bridge. 
Therefore,
$$
d_\Delta(x) > m-1 - 2\mu|S| \, .
$$
Hence, as before, for $|S|$ large enough, we obtain
$$
d_\Delta(x) > \frac{m-1}{2} \, .
$$
Therefore, every vertex of~$\Delta$ has degree at least $(m-1)/2$, so, by Corollary~\ref{Hallcor}, $\Delta$ has a perfect matching (between the two parts~$I_{m-1}$ and~$E_0$). By the definition of~$\Delta$, this means that the elements of $E_0$ can be used to connect the strips $R_t$ in the desired manner described above.\\[-10pt]

The proof is now complete.

\end{proof}

We cannot yet fix once and for all a path $P_K$ as described in Lemma~\ref{goodpath}, since the vertices $u$ and $v$ have not yet been chosen. Their choice will be addressed in the following, and final, subsection.

\subsection{Joining the paths $P_B$ and $P_K$}
In this section, we shall complete the construction of a Hamiltonian cycle in $\Gamma(G)$.

First of all, we need to choose two suitable vertices to connect the endpoints of~$P_B$ to the path that, shortly thereafter, will be regarded as the path~$P_K$. As dictated by the third condition of Lemma~\ref{badpath}, let~$p_0\in C_0\setminus\{1\}$ and~$p_{n-1}\in C_{n-1}$ be the good vertices that are the endpoints of~$P_B$.
We seek a vertex of~$E_{n-1}$ adjacent to~$p_0$. Since~$p_0$ is a good vertex of $C_0$ and $\big[(C_0,C_{n-1})\big]$ is a distinguished pair, by Lemma~\ref{GoodVertexVar} we obtain
$$
d_{E_{n-1}}(p_0) > (1-2\mu)|E_{n-1}| = (1-2\mu)m > 0 \, ,
$$
where the last inequality holds for $|S|$ large enough, by Lemma~$\ref{Tbehav}$ and $\mu < 1/4$.
Hence, such a vertex exists: denote it by $v$. 
Analogously, there exists a vertex of $E_1$ adjacent to $p_{n-1}$: denote it by $u$. Note that if $n=2$, then $E_1=E_{n-1}$, and so we must explicitly choose $u$ and $v$ to be distinct: for $|S|$ large enough, clearly, this can be accomplished. 

Now, by Lemma~\ref{goodpath}, there exists a path $P_K$ in $\Gamma(G)$ that starts at $u$, ends at $v$, and passes through all the vertices of $\Gamma(G)$ not involved in $P_B$. Therefore, the cycle obtained by joining the two paths $P_B$ and $P_K$ as described above is a Hamiltonian cycle of $\Gamma(G)$.\\[0pt]

Throughout the proof, several steps are shown to hold for $|S|$ sufficiently large. More precisely, each of these steps holds for every almost simple group $G$ satisfying the hypotheses and whose socle $S$ has order exceeding a certain positive real threshold $s_i$ (which exists, but is not explicitly determined). For the proof to work, all these steps must hold simultaneously: since only finitely many thresholds $s_i$ occur, their maximum provides a single threshold that ensures that all the steps hold.\\[0pt]

This completes the proof of Theorem~A.\hfill\scalebox{1.3}{$\blacksquare$}

\section{Further directions}
As already remarked, the proof of Theorem A given in this paper relies essentially on two powerful ingredients, both valid for almost simple groups: Theorem 1.1 of Fulman-Garzoni-Guralnick in \cite{FULMAN} (restated here as Theorem~\ref{FULMANth}) and Corollary 9 of Harper-Quick in \cite{uniserial} (restated here as Theorem~\ref{HQstrongTh}).
Since the Harper-Quick estimate holds more generally for the much broader class of monolithic groups, it is natural to ask whether Theorem A could also be extended to this class (clearly, in view of Theorem 1.2 of \cite{5authors}, we may restrict our attention to non-abelian monoliths). Indeed, by retracing the arguments used here in the proof of Theorem A, it feels like the only serious obstruction that prevents the same arguments from working for monolithic groups as well is the absence of an analogue of the Fulman-Garzoni-Guralnick theorem for monolithic groups. However, as also observed in the introduction of \cite{FULMAN}, such an extension cannot even hold for the subclass of almost simple groups with alternating socle. The situation for monolithic groups whose monolith is a direct product of simple groups of Lie type or sporadic could be more favourable, although no corresponding result seems to be currently available. Thus, a more reasonable approach would be to find a way around those parts of the proof that make use of the Fulman-Garzoni-Guralnick estimate. Moreover, there is some evidence in the literature that Theorem A could hold for the class of monolithic groups, since it is known to hold for some of its subclasses (see again~\cite{5authors}, and also~\cite{Crestani}).
Therefore, a natural next step in this area would be to establish the following conjecture (which is a special case of Conjecture~\ref{MarotiConj}).

\begin{conj}\label{ConjMon}
Let $G$ be a finite monolithic group with a non-abelian monolith $N$. Write $N=S^k$, where $k$ is a positive integer and $S$ is a non-abelian simple group. Assume that $G/N$ is cyclic. Then, for $S$ sufficiently large, the generating graph $\Gamma(G)$ of $G$ is Hamiltonian. Possibly, the weaker assumption that $G$ (equivalently, $N$) is sufficiently large already suffices.
\end{conj}
Notice that, for $S$ non-abelian, $G$ large enough is equivalent to $N$ large enough. However, $S$ large enough implies both $G$ and $N$ large enough, whereas the converse does not hold. This is another difference that arises in passing from almost simple groups to monolithic groups.\\[-8pt]

Finally, we want to remark that Theorem A is asymptotic in nature, but we currently have no estimate on how large the order of the group must be for its conclusion to hold. Drawing a parallel with the previous literature, the existence of a Hamiltonian cycle for sufficiently large alternating and symmetric groups was proved in 2010, in \cite{5authors}, whereas explicit bounds ensuring the existence of such a cycle were obtained only later, in 2018, in \cite{Erdem}. This naturally leads to the following problem.

\begin{openproblem}
Determine an explicit lower bound, either on the order of the group or on the order of its socle, that ensures the existence of a Hamiltonian cycle for almost simple groups with socle of Lie type.
\end{openproblem}
As a complement to the asymptotic perspective, we mention that for almost simple groups with socle of order at most $10^6$ the existence of a Hamiltonian cycle is established by Theorem~1.5 of \cite{5authors}, whose proof relies on computational methods developed in \cite{BGK}.

\paragraph{Acknowledgements}

This work was developed during the author's research visit to the Alfréd Rényi Institute of Mathematics in Budapest, Hungary. The author thanks the Institute for its hospitality and is grateful to Professor Attila Maróti for introducing him to the topics addressed in this paper and for his valuable suggestions. This work was supported by the “National Group for Algebraic and Geometric Structures, and their Applications” (GNSAGA–INdAM), of which the author is a member. The author is also a member of the non-profit association ``AGTA --- Advances in Group Theory and Applications'' (http://www.advgrouptheory.com).

\bigskip
\bigskip

\begin{minipage}{\textwidth}
\begin{flushleft}
\rule{8cm}{0.4pt}\\
\end{flushleft}

\bigskip
\bigskip

{
\sloppy
\noindent
Luigi Iorio

\noindent 
Dipartimento di Matematica e Applicazioni ``Renato Caccioppoli''

\noindent
Università degli Studi di Napoli Federico II

\noindent
Complesso Universitario Monte S. Angelo

\noindent
Via Cintia, Napoli (Italy)

\noindent
e-mail: luigi.iorio2@unina.it 

}
\end{minipage}

\end{document}